\providecommand{\algorithmname}{Algorithm}
\numberwithin{equation}{section}
\numberwithin{figure}{section}
\theoremstyle{plain}
\newtheorem{thm}{\protect\theoremname}
\theoremstyle{plain}
\newtheorem{prop}{\protect\propositionname}
\theoremstyle{plain}
\newtheorem{lem}{\protect\lemmaname}
\theoremstyle{definition}
\newtheorem{claim}{\protect\claimname}
\subjclass{11M26, 11N05, 42A75}
\providecommand{\claimname}{Claim}
\providecommand{\lemmaname}{Lemma}
\providecommand{\propositionname}{Proposition}
\providecommand{\theoremname}{Theorem}
\begin{document}
\title{On the sign changes of $\psi(x)-x$}
\author{M.~Grześkowiak, J.~Kaczorowski, Ł.~Pańkowski, M.~Radziejewski}
\keywords{Prime number theorem; oscillations; Riemann zeta-function; zeros of
analytic almost periodic functions.}
\dedicatory{\raggedleft For Alberto Perelli\\
in celebration of his 70th birthday}
\begin{abstract}
We improve the lower bound for $V(T)$, the number of sign changes
of the error term $\psi(x)-x$ in the Prime Number Theorem in the
interval $[1,T]$ for large $T$. We show that
\[
\liminf_{T\to\infty}\frac{V(T)}{\log T}\geq\frac{\gamma_{0}}{\pi}+\frac{1}{60}
\]
where $\gamma_{0}=14.13\ldots$ is the imaginary part of the lowest-lying
non-trivial zero of the Riemann zeta-function. The result is based
on a new density estimate for zeros of the associated $k$-function,
over $4\cdot10^{21}$ times better than previously known estimates
of this type.
\end{abstract}

\begingroup
\def\uppercasenonmath#1{} 
\let\MakeUppercase\relax 
\maketitle
\endgroup

\section{Introduction}

Let $V(T)$ denote the number of sign changes of $\psi(x)-x$ in the
interval $[1,T]$, where $\psi(x)=\sum_{n\leq x}\Lambda(n)$ denotes
the familiar prime counting function. The main objective of this paper
is to prove the following result. 
\begin{thm}
\label{thm:Main-Theorem}We have 
\[
\liminf_{T\to\infty}\frac{V(T)}{\log T}\geq\frac{\gamma_{0}}{\pi}+\frac{1}{60},
\]
 where $\gamma_{0}=14.13\ldots$ denotes the ordinate of the lowest-lying
non-trivial zero of the Riemann zeta-function. 
\end{thm}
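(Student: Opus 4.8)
The plan is to convert the question into a statement about the zeros of an almost periodic function — the \emph{$k$-function} attached to the nontrivial zeros of $\zeta$ — and then to transfer that statement back to sign changes of $\psi(x)-x$. One may freely assume the Riemann Hypothesis: were it false, a zero $\beta+i\gamma^{\ast}$ with $\beta>\tfrac12$ (necessarily of enormous height, RH having been checked very far up) would make $e^{-u/2}(\psi(e^{u})-e^{u})$ oscillate with exponentially growing amplitude at a fixed frequency, forcing a number of sign changes of $\psi(x)-x$ in $[1,T]$ far exceeding $\tfrac{\gamma_{0}}{\pi}\log T$, so the conclusion then holds a fortiori. Under RH I would set $u=\log x$ and study
\[
F(u):=e^{-u/2}\bigl(\psi(e^{u})-e^{u}\bigr),
\]
whose sign changes on $[0,\log T]$ agree, up to $O(1)$, with those of $\psi(x)-x$ on $[1,T]$. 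From a smoothed form of the explicit formula, for any slowly growing cutoff $H=H(T)\to\infty$ one writes $F=k_{H}+r_{H}$, where
\[
k_{H}(u):=-\sum_{0<|\gamma|\le H}\frac{e^{i\gamma u}}{\rho}
\]
is a real exponential polynomial — the $k$-function truncated at height $H$ — and, by the Riemann--von Mangoldt formula, the remainder obeys the mean-square bound $\|r_{H}\|_{2}^{2}\ll\sum_{|\gamma|>H}|\rho|^{-2}\ll(\log H)/H\to0$.

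All Fourier frequencies of $k_{H}$ satisfy $|\gamma|\ge\gamma_{0}$ and none equals $0$, so by the classical resonance argument — equivalently, because $k_{H}$ is a ``high-pass'' oscillation whose lowest frequency is $\gamma_{0}$ — the number $Z_{k_{H}}(U)$ of its sign changes in $[0,U]$ is at least $(\gamma_{0}/\pi+o(1))U$, which reproves the known benchmark $\liminf V(T)/\log T\ge\gamma_{0}/\pi$. The crux is to sharpen this to a density estimate
\[
Z_{k_{H}}(U)\ \ge\ \Bigl(\tfrac{\gamma_{0}}{\pi}+\tfrac{1}{60}+o(1)\Bigr)U\qquad(U\to\infty),
\]
uniformly for $H$ large. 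To get past the lowest harmonic I would bring in the next zeros $\gamma_{1},\gamma_{2},\dots$, whose ordinates are known to high precision, and build a detector adapted to this finite block: for instance, sample $k_{H}$ — and a suitable short convolution of $k_{H}$ that emphasises the lowest frequencies and damps the rest — along points spaced according to the leading harmonic $\cos(\gamma_{0}u+\varphi_{0})$, and show that on a subfamily of positive density the sign pattern one reads off is strictly richer than that of the leading harmonic alone, contributing at least $1/60$ to the density after averaging over all sampled points. Turning this into an effective inequality is the principal obstacle: one must bound, uniformly in $H$, the near-cancellations among \emph{all} harmonics with $|\gamma|\le H$ so that they cannot annihilate the extra sign changes, while keeping every numerical constant explicit. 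I expect this to require rigorous computation (interval arithmetic) with the first several zeros of $\zeta$ together with an effective joint equidistribution input for the associated frequencies — or else a substitute argument over a fundamental domain of their ratios — and it is here that one gains the factor exceeding $4\cdot10^{21}$ over the density estimates previously available.

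Granting the density estimate in the strong form that produces, for some fixed $\delta_{0}>0$ and $\eta_{0}>0$ depending only on the finitely many low zeros used, at least $(\gamma_{0}/\pi+1/60-o(1))\log T$ sign changes of $k_{H}$ in $[0,\log T]$, each bracketed by a pair of ``probe points'' — these being pairwise at distance $\ge\eta_{0}$ — at which $k_{H}$ takes opposite values of modulus $\ge\delta_{0}$, the transfer to $\psi(x)-x$ is routine. Put $B_{T}=\{u\in[0,\log T]:|r_{H}(u)|\ge\delta_{0}\}$; then $|B_{T}|\ll\delta_{0}^{-2}\bigl((\log H)/H\bigr)\log T=o(\log T)$ by Chebyshev and the mean-square bound. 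A certified sign change of $k_{H}$ survives in $F=k_{H}+r_{H}$ unless one of its two bracketing probe points falls in $B_{T}$, and since the probe points are $\eta_{0}$-separated at most $O(|B_{T}|)=o(\log T)$ of them fail. Hence $V(T)\ge(\gamma_{0}/\pi+1/60-o(1))\log T$; letting $H=H(T)\to\infty$ sufficiently slowly and then $T\to\infty$ gives $\liminf_{T\to\infty}V(T)/\log T\ge\gamma_{0}/\pi+1/60$, as required.
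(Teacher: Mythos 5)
Your proposal correctly identifies the outer architecture (dispose of the non-RH case separately, reduce to a density statement about the $k$-function attached to the low-lying zeros, expect a rigorous computation plus some effective equidistribution input), but the entire core of the theorem --- the mechanism that actually produces the extra $\tfrac{1}{60}$ --- is left as a placeholder ("Granting the density estimate in the strong form\dots"). The paper does not count sign changes of a real-valued truncation $k_H(u)$ on the real line at all. It invokes Kaczorowski's earlier result $\kappa_0\geq\frac{\gamma_0}{\pi}+2\varkappa$, where $\varkappa$ is the density of \emph{complex} zeros of the normalized $k$-function $F^{*}(z)=1+\sum_n\frac{\rho_0}{\rho_n}e^{i(\gamma_n-\gamma_0)z}$ in the open upper half-plane, and then lower-bounds $\varkappa$ by encircling a zero of a phase-shifted partial sum with a contour and using the argument principle: for every shift $\tau$ with all $\left\Vert\frac{\omega_n\tau-\eta_n}{2\pi}\right\Vert$ controlled by a weighted-sum condition, the shifted contour traps a zero of $F^{*}$. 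Your "detector/sampling" sketch is a different and unproved route, and it is not clear it could be made to work directly on the boundary function.

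More importantly, the step you defer to "an effective joint equidistribution input" or "a quantitative Kronecker theorem" is precisely where all previous attempts stalled: as the paper notes, quantitative Kronecker bounds (Chen, Gonek--Montgomery) and the Minkowski convex-body approach lose a factor of order $2^{-N}$ (or worse, a factor depending on lower bounds for integer linear combinations of the $\gamma_n$), which is why the best previously published constant was $\approx 1.867\cdot10^{-30}$. The decisive new idea of the paper is the \emph{tiling method} (Theorem~\ref{thm:Tiling-method}): an LLL-generated lattice of "reachable" points in $[-\tfrac12,\tfrac12]^{N}$ shows that $\bigl(\frac{\omega_n\tau-\eta_n}{2\pi}\bigr)_{n=1}^{N}$ is equidistributed modulo $1$ down to the lattice scale, so the density of good shifts is essentially the full volume $\operatorname{vol}(A')$ of the solution set rather than $2^{-N}\operatorname{vol}(A)$. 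Without this (or an equivalent substitute) your plan cannot reach $\tfrac{1}{60}$. Separately, your transfer step has a genuine flaw: knowing $|B_T|=o(\log T)$ does not bound the \emph{number} of $\eta_0$-separated probe points lying in $B_T$, since $B_T$ could be a union of many tiny intervals each capturing one probe point; you would need pointwise or locally averaged control of $r_H$, not just an $L^2$ bound on the measure of its large values.
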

The problem of estimating $V(T)$ from below has a long history. A.~E.~Ingham~\cite{Ingham}
proved in 1936, assuming a kind of quasi-Riemann Hypothesis, that
the limit $\kappa_{0}:=\liminf_{T\to\infty}\frac{V(T)}{\log T}$ was
positive. Nearly 50 years later, the second named author proved the
same result unconditionally with $\kappa_{0}\geq\gamma_{0}/(4\pi)$,
cf.~\cite{Kaczorowski1984}. Soon after, he improved this to $\kappa_{0}\geq\gamma_{0}/\pi$,
cf.~\cite{Kaczorowski1985}. The same constant was previously obtained
for the $\limsup$ instead of $\liminf$ by G. P\'{o}lya~\cite{Polya1969}.
Breaking the $(\gamma_{0}/\pi)$-barrier required an essentially new
idea and was achieved in~\cite{Kaczorowski1991}. However, the improvement
was not large: $\kappa_{0}\geq\gamma_{0}/\pi+2\cdot10^{-248}$. A
better result, $\kappa_{0}\geq\gamma_{0}/\pi+2\cdot10^{-25}$, was
announced in~\cite{Kaczorowski1996}, but no proof of this result
was ever published. Recently, T.~Morrill, D.~Platt, and T.~Trudgian~\cite{MorrillPlattTrudgian2022}
published a full proof of the estimate $\kappa_{0}\geq\gamma_{0}/\pi+1.867\cdot10^{-30}$.
Theorem~\ref{thm:Main-Theorem} supersedes this result. First we
outline (below) the ideas leading to Theorem~\ref{thm:Main-Theorem}
in the most difficult case, when the Riemann Hypothesis is true. The
complete proof is contained in the following sections.

It was shown by the second author~\cite[Corollary 1.1]{Kaczorowski1991}
that $\kappa_{0}$ is related to the density of zeros of the normalized
$k$-function
\[
F^{*}(z)=1+\sum_{n=1}^{\infty}\frac{\rho_{0}}{\rho_{n}}e^{i(\gamma_{n}-\gamma_{0})z},\qquad z\in\mathbb{H},
\]
where $\mathbb{H}$ is the upper half-plane $\left\{ z\in\mathbb{C}:\Im z>0\right\} $
and $\rho_{n}=\frac{1}{2}+i\gamma_{n}$ denotes the $n$th non-trivial
zero of the Riemann zeta function in the upper half-plane, starting
with $\rho_{0}=\frac{1}{2}+i14.134725\dots$, counted according to
their multiplicity, so $\gamma_{0}\leq\gamma_{1}\leq\gamma_{2}\leq\dotsc$
\begin{thm}[{cf.~\cite[Corollary 1.1]{Kaczorowski1991}}]
\label{thm:JK-Corollary-1.1}Suppose the Riemann Hypothesis is true.
Let
\[
\varkappa=\lim_{Y\to0^{+}}\lim_{T\to\infty}\frac{1}{T}\#\left\{ \xi=x+iy:F^*(\xi)=0,0<x<T,y\geq Y\right\} .
\]
Then
\[
\liminf_{T\to\infty}\frac{V(T)}{\log T}\geq\frac{\gamma_{0}}{\pi}+2\varkappa.
\]
\end{thm}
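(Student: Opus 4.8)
The plan is to reduce the count of sign changes of $\psi(x)-x$ to an application of the argument principle to $F^{*}$ on rectangles sitting just above the real axis, following \cite{Kaczorowski1991}, and then to let the rectangles descend to $\mathbb{R}$. First, by the explicit formula under the Riemann Hypothesis, writing $x=e^{u}$ and $\theta_{0}=\arg\rho_{0}$ and pairing $\rho$ with $\bar\rho$, one has — using $\sum_{n\ge0}e^{i\gamma_{n}u}/\rho_{n}=\rho_{0}^{-1}e^{i\gamma_{0}u}F^{*}(u)$ — formally
\[
\psi(e^{u})-e^{u}=-\frac{2e^{u/2}}{|\rho_{0}|}\,\Re\!\bigl(e^{i(\gamma_{0}u-\theta_{0})}F^{*}(u)\bigr)+O(1),
\]
but on the real line the series for $F^{*}$ converges only conditionally, so instead I would work with the $k$-function of \cite{Kaczorowski1991}: a kernel-averaged version of $\psi(t)-t$ designed so that (i) its sign changes are subordinate to those of $\psi(x)-x$, so that $V(T)$ dominates the number of its sign changes on a comparable interval, and (ii) for a small parameter $Y>0$ its leading term is, up to a negligible error, $g_{Y}(u):=\Re\!\bigl(e^{i(\gamma_{0}u-\theta_{0})}F^{*}(u+iY)\bigr)$, where the series for $F^{*}$ now converges absolutely. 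It then suffices, for each fixed (generic) $Y>0$, to bound from below the number $S_{Y}(L)$ of sign changes of $g_{Y}$ on $[0,L]$, $L=\log T$, and to let $Y\to0^{+}$ at the end.

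Next I would run the argument principle for $G(z):=e^{i(\gamma_{0}z-\theta_{0})}F^{*}(z)$, which is analytic on $\mathbb{H}$, has the same zeros there as $F^{*}$ (and as $F$), and satisfies $\arg G(z)=\gamma_{0}\Re z-\theta_{0}+\arg F^{*}(z)$. Since $|F^{*}(x+iy)-1|\le\sum_{n\ge1}|\rho_{0}/\rho_{n}|e^{-(\gamma_{n}-\gamma_{0})y}\to0$ as $y\to\infty$ uniformly in $x$, all zeros of $F^{*}$ in $\mathbb{H}$ lie in a strip $0<\Im z\le C$; I would fix $Y_{0}>C$ so large that in addition $|F^{*}-1|<1$ on the line $\Im z=Y_{0}$. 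Applying the argument principle to $G$ on $R=[a,a+L]\times[Y,Y_{0}]$, with $a\in[0,1]$ and $Y$ small and generic so that $\partial R$ avoids the (locally finitely many) zeros of $G$: along the top side $\gamma_{0}\Re z$ changes by $-\gamma_{0}L$ while $\arg F^{*}$ stays in $(-\tfrac{\pi}{2},\tfrac{\pi}{2})$, and along each vertical side $\gamma_{0}\Re z$ is constant and $\arg F^{*}$ changes by an amount bounded independently of $L$. Hence the top and the two vertical sides contribute $-\gamma_{0}L+O(1)$ to $\Delta_{\partial R}\arg G=2\pi N_{Y}(R)$, where $N_{Y}(R)$ counts the zeros of $F^{*}$ in $R$ — equivalently, since $Y_{0}>C$, the zeros with real part in $[a,a+L]$ and imaginary part $\ge Y$ — so the change $\Delta_{0}$ of $\arg G$ along the base is
\[
\Delta_{0}=2\pi N_{Y}(R)+\gamma_{0}L+O(1),
\]
and $\lim_{L\to\infty}N_{Y}(R)/L=\lim_{T\to\infty}\tfrac1T\#\{F^{*}=0:\,0<\Re z<T,\ \Im z\ge Y\}=:d(Y)$.

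To finish, I would convert the variation of the argument into sign changes. On the base $g_{Y}(u)=|F^{*}(u+iY)|\cos\bigl(\arg G(u+iY)\bigr)$, and $|F^{*}(u+iY)|>0$ for our generic $Y$, so a sign change of $g_{Y}$ occurs exactly where $u\mapsto\arg G(u+iY)$ passes from one of the intervals $\bigl(\tfrac{\pi}{2}+m\pi,\tfrac{\pi}{2}+(m+1)\pi\bigr)$ into a neighbouring one; as $\arg G$ stays in a single such interval between two consecutive passages, the number of passages is at least $|\Delta_{0}|/\pi-1$, and $\Delta_{0}>0$ for large $L$, so $S_{Y}(L)\ge\Delta_{0}/\pi-1=2N_{Y}(R)+\tfrac{\gamma_{0}}{\pi}L+O(1)$. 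Dividing by $L=\log T$, using $V(T)\ge S_{Y}(\log T)+O_{Y}(1)$ from property (i), and letting $T\to\infty$, I get $\liminf_{T\to\infty}V(T)/\log T\ge 2d(Y)+\gamma_{0}/\pi$ for every small generic $Y$; letting $Y\to0^{+}$ and recalling $d(Y)\nearrow\varkappa$ gives $\liminf_{T\to\infty}V(T)/\log T\ge\gamma_{0}/\pi+2\varkappa$.

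The main obstacle is the first step — constructing the $k$-function. The oscillatory part of the explicit formula converges only conditionally on $\mathbb{R}$, and the tail left by truncating it need not be small compared with the main term $\sim e^{u/2}F^{*}(u)$, so a crude truncation is useless; the $k$-function must simultaneously (a) create no spurious sign changes, so that lower bounds for its sign changes pass to $V(T)$, and (b) be represented by an absolutely convergent generalized Dirichlet series whose analytic continuation into $\mathbb{H}$ is exactly $e^{i\gamma_{0}z}F^{*}(z)$ up to a nonzero constant, so that the clean argument-principle count applies — and moreover the error in (ii) above must be negligible for each fixed $Y$ (the relevant constants may blow up as $Y\to0$, which is why the limit in $T$ is taken before the limit in $Y$). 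A secondary, routine point is to know that $\varkappa$ is well defined — i.e.\ that $d(Y)$ exists and is finite — which comes from the almost periodicity of $F^{*}$ and the regularity of its zero-counting function.
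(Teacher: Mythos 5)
First, a point of orientation: the paper does not prove this theorem. It is stated with a ``cf.'' reference to \cite[Corollary 1.1]{Kaczorowski1991} and used as a black box; the new content of the present paper lies entirely in the lower bound for $\varkappa$. So there is no in-paper proof to compare yours against, and your proposal must be judged as a reconstruction of the cited result.

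Your skeleton is the right one and does match the strategy of \cite{Kaczorowski1991}: apply the argument principle to $e^{i(\gamma_{0}z-\theta_{0})}F^{*}(z)$ on a rectangle $[a,a+L]\times[Y,Y_{0}]$ with $Y_{0}$ above all zeros, obtain $\Delta_{0}=2\pi N_{Y}+\gamma_{0}L+O(1)$ for the argument variation along the base, convert this into at least $\Delta_{0}/\pi-1$ sign changes of $u\mapsto\Re\bigl(e^{i(\gamma_{0}u-\theta_{0})}F^{*}(u+iY)\bigr)$, and take $T\to\infty$ before $Y\to0^{+}$. But the step you flag as ``the main obstacle'' is not an obstacle to be noted --- it \emph{is} the theorem. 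Everything preceding it concerns only the explicitly given almost periodic function $F^{*}$; the entire arithmetic content of the statement is the passage from sign changes of the boundary function of $F^{*}$ at height $Y$ to sign changes of $\psi(x)-x$, and you assume this as ``property (i)'' of a $k$-function you never construct. Moreover, the direction in which you set up that passage is the fragile one: you want ``sign changes of the kernel-averaged $\psi(e^{u})-e^{u}$ at height $Y$ are at least the sign changes of $g_{Y}$ minus corrections,'' but an additive $o(1)$ discrepancy between the average and $g_{Y}$ can destroy arbitrarily many sign changes wherever $g_{Y}$ barely crosses zero, so the inequality cannot be obtained by perturbation alone. The argument in the literature runs in the contrapositive-robust direction: a P\'olya--Kaczorowski type lemma shows that if a real boundary function has only $V$ sign changes on $[0,L]$, then the argument of its analytic completion can vary by at most $\pi(V+O(1))$ along the segment at height $Y$, up to errors controlled by the kernel and by the tails of the explicit formula; this is then played off against the lower bound $\Delta_{0}$. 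Supplying that lemma, the precise kernel, and the attendant error analysis (plus the uniform $O(1)$ bound for the vertical sides, which needs a uniform count of zeros of $F^{*}$ in unit boxes at height $\geq Y$) is what a proof of this theorem consists of. As written, your proposal is an accurate outline of the soft half together with a correct identification, but not a resolution, of the hard half.
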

In other words, we have $\kappa_{0}\geq\frac{\gamma_{0}}{\pi}+2\varkappa$,
and so, in order to prove Theorem~\ref{thm:Main-Theorem} we need
to show a lower bound for $\varkappa$. The proof will rest on a general
theorem on zeros of holomorphic almost periodic functions on the upper
half-plane, which is of an independent interest. Suppose $(a(n))_{n=1}^{\infty}$
and $(\omega_{n})_{n=1}^{\infty}$ are two sequences of numbers such
that $a(n)\in\mathbb{C}$, $\omega_{n}\in\mathbb{R}$, $0<\omega_{1}<\omega_{2}<\ldots$
and 
\[
\sum_{n=1}^{\infty}\left|a(n)\right|e^{-\omega_{n}y}<\infty\qquad\text{for all \ensuremath{y>0}}.
\]
For $z\in{\mathbb{H}}$ we define 
\begin{equation}
F(z)=1+\sum_{n=1}^{\infty}a(n)e^{i\omega_{n}z}.\label{eq:F}
\end{equation}
This is a holomorphic, almost periodic function on $\mathbb{H}$.
For $N\geq1$ and $z=x+iy\in{\mathbb{H}}$ we set 
\begin{equation}
F_{N}(z):=1+\sum_{n=1}^{N}a(n)e^{i\omega_{n}z},\label{eq:F_N}
\end{equation}
 
\begin{equation}
R_{N}(y):=\sum_{N+1}^{\infty}|a(n)|e^{-\omega_{n}y}.\label{eq:R_N}
\end{equation}
Let $\left\Vert x\right\Vert $ denote the norm of a real number $x$,
i.e. its distance from the nearest integer. The basic idea, introduced
in~\cite[pp. 54–55]{Kaczorowski1991}, is to find a zero $\xi_{0}$
of $F(x+iy)$ and then use the almost-periodicity of $F(x+iy)$ in
$x$ to show that there is a sequence of ``copies'' of $\xi_{0}$.
We can actually use a zero of the partial sum $F_{N}(z)$ for $N=8$,
say, and encircle it with a contour $z(t)$, $0\leq t\leq1$, such
that the values $F_{N}(z(t))$ go around $0$ by a distance sufficiently
greater than the maximal approximation error $R_{N}(\Im z)$. For
appropriate shifts $\tau$, i.e. such that all the norms $\left\Vert \frac{\omega_{n}\tau}{2\pi}\right\Vert $,
for $n=1,\dotsc,N$, are sufficiently small, the shifted contour $z(t)+\tau$
must contain a zero of $F$, because each summand $a(n)e^{i\omega_{n}(z+\tau)}$
will be close to $a(n)e^{i\omega_{n}z}$ so that the argument of $F(z(t)+\tau)$
still grows by $2\pi$ as $t$ goes from $0$ to $1$, so there must
be a zero inside the contour $z(t)+\tau$ by the argument principle.
The problem reduces to counting the density of ``appropriate'' shifts
with the additional constraint that the distinct $\tau$'s should
differ by at least $\delta$, the width of our contour, so that no
zero will be counted twice. The improvement obtained here involves
two new ideas:
\begin{itemize}
\item [$(i)$] a more flexible (although slightly technical) sufficient
condition for the shifted contour to contain a zero, where the uniform
bound on the norms $\left\Vert \frac{\omega_{n}\tau}{2\pi}\right\Vert $
is replaced by a weighted sum, cf.~Proposition~\ref{prop:sufficient-condition},
\item [$(ii)$] a new method, called the \emph{tiling method}, for estimating
the density of appropriate shifts, cf.~Theorem~\ref{thm:Tiling-method}.
\end{itemize}
Proposition~\ref{prop:sufficient-condition} also involves shifting
the phases of the summands of $F_{N}$ by arbitrary values $\eta_{1},\dotsc,\eta_{N}\in\mathbb{R}$,
and allows us to alter the expected rate of variation of the function's
argument along the contour. Both of these features are optional, but
contribute to a higher final result.
\begin{prop}
\label{prop:sufficient-condition}Let $\boldsymbol{\eta}=(\eta_{1},\dotsc,\eta_{N})\in\mathbb{R}^{N}$
and denote
\[
F_{N}^{\boldsymbol{\eta}}(z)=1+\sum_{n=1}^{N}a(n)e^{i\left(\eta_{n}+\omega_{n}z\right)}.
\]
Suppose $z_{1}(t)$, $0\leq t\leq1$, is a closed contour in $\mathbb{H}$,
oriented counterclockwise, not passing through any zero of $F_{N}^{\boldsymbol{\eta}}(z)$.
Suppose $z_{2}(t)$ is a closed contour around $0$, also oriented
counterclockwise, satisfying $\left|z_{2}(t)\right|=1$, $0\leq t\leq1$,
and $\eta:[0,1]\to[-\tfrac{\pi}{2},\tfrac{\pi}{2}]$ is an arbitrary
function. Let
\[
\varphi_{n}(t)=\eta_{n}+\arg a(n)+\omega_{n}\Re z_{1}(t)-\arg z_{2}(t)-\eta(t),\qquad0\leq t\leq1,n=1,\dotsc,N.
\]
Suppose $u(t)$ is a function satisfying
\[
0<u(t)<\Re\left(F_{N}^{\boldsymbol{\eta}}(z_{1}(t))\overline{z_{2}(t)}e^{-i\eta(t)}\right)-R_{N}(\Im z_{1}(t)),\qquad0\leq t\leq1.
\]
For $\varphi,\psi\in[0,\pi]$ let 
\[
v(\varphi,\psi)=\begin{cases}
\cos\left(\varphi\right)-\cos\left(\varphi+\psi\right), & \varphi+\psi\leq\pi,\\
\cos\left(\varphi\right)+1, & \text{otherwise.}
\end{cases}
\]
Suppose $w_{n}:[0,\tfrac{1}{2}]\to[0,1]$ are non-decreasing functions
such that
\begin{equation}
w_{n}(x)\geq\sup_{0\leq t\leq1}\frac{\left|a(n)\right|}{u(t)e^{\omega_{n}\Im z_{1}(t)}}v\left(2\pi\left\Vert \frac{\varphi_{n}(t)}{2\pi}\right\Vert ,2\pi x\right)\label{eq:wn(x)-assumption}
\end{equation}
for every $x\in[0,\tfrac{1}{2}]$ and $n=1,\dotsc,N$. Then for every
$\tau\in\mathbb{R}$ such that
\begin{equation}
\sum_{n=1}^{N}w_{n}\left(\left\Vert \frac{\omega_{n}\tau-\eta_{n}}{2\pi}\right\Vert \right)\leq1\label{eq:prop-sum-of-penalties}
\end{equation}
there is a zero of $F(z)$ inside the contour $z_{1}(t)+\tau$.
\end{prop}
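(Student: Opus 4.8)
The plan is as follows. Fix $\tau\in\mathbb{R}$ satisfying \eqref{eq:prop-sum-of-penalties}. I would show that the closed curve $t\mapsto F(z_1(t)+\tau)$, $0\le t\le1$, has nonzero winding number about $0$; since $F$ is holomorphic on $\mathbb{H}$ and $z_1(t)+\tau\in\mathbb{H}$ (because $\Im z_1(t)>0$), the argument principle then produces a zero of $F$ inside $z_1(t)+\tau$. Everything rests on the inequality
\begin{equation}
\Re\Big(F(z_1(t)+\tau)\,\overline{z_2(t)}\,e^{-i\eta(t)}\Big)>0,\qquad 0\le t\le1,\label{eq:plan-key}
\end{equation}
which in particular guarantees that $F$ does not vanish on $z_1(t)+\tau$. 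Granting \eqref{eq:plan-key}, the quotient $F(z_1(t)+\tau)\big/\big(z_2(t)e^{i\eta(t)}\big)$, which equals $F(z_1(t)+\tau)\overline{z_2(t)}e^{-i\eta(t)}$ since $|z_2(t)e^{i\eta(t)}|=1$, stays in the open right half-plane, so its argument changes by less than $\pi$ in absolute value over $[0,1]$; as $t\mapsto F(z_1(t)+\tau)$ is closed, its argument changes by $2\pi m$ for some $m\in\mathbb{Z}$, while that of $z_2(t)e^{i\eta(t)}$ changes by $2\pi k+(\eta(1)-\eta(0))$, where $k\ge1$ is the winding number of $z_2$ about $0$ and $|\eta(1)-\eta(0)|\le\pi$. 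Hence $|2\pi(m-k)|<2\pi$, forcing $m=k\ge1$.

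It remains to prove \eqref{eq:plan-key}. Write
\[
F(z_1(t)+\tau)=F_N^{\boldsymbol{\eta}}(z_1(t))+\sum_{n=1}^{N}a(n)e^{i\omega_n z_1(t)}\big(e^{i\omega_n\tau}-e^{i\eta_n}\big)+\sum_{n>N}a(n)e^{i\omega_n(z_1(t)+\tau)},
\]
multiply by the unimodular factor $\overline{z_2(t)}e^{-i\eta(t)}$, and take real parts. The first term contributes more than $u(t)+R_N(\Im z_1(t))$ by the hypothesis on $u$; the tail contributes at least $-R_N(\Im z_1(t))$ since $\big|e^{i\omega_n(z_1(t)+\tau)}\big|=e^{-\omega_n\Im z_1(t)}$ and $R_N$ is defined by \eqref{eq:R_N}. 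Thus \eqref{eq:plan-key} reduces to showing that the middle sum contributes at least $-u(t)$.

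This is the computational core. Using $z_2(t)=e^{i\arg z_2(t)}$, $a(n)=|a(n)|e^{i\arg a(n)}$ and $e^{i\omega_n\tau}-e^{i\eta_n}=2i\sin\!\big(\tfrac{\omega_n\tau-\eta_n}{2}\big)e^{i(\omega_n\tau+\eta_n)/2}$, together with $\Re(ie^{i\theta})=-\sin\theta$ and the identity $2\sin\alpha\,\sin(\theta+\alpha)=\cos\theta-\cos(\theta+2\alpha)$, the real part of the $n$th summand of the middle sum multiplied by $\overline{z_2(t)}e^{-i\eta(t)}$ simplifies to
\[
|a(n)|\,e^{-\omega_n\Im z_1(t)}\big[\cos\big(\varphi_n(t)+\omega_n\tau-\eta_n\big)-\cos\varphi_n(t)\big],
\]
with $\varphi_n(t)$ exactly as defined in Proposition~\ref{prop:sufficient-condition}. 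Put $\alpha=2\pi\|\varphi_n(t)/(2\pi)\|\in[0,\pi]$ and $\beta=2\pi\|(\omega_n\tau-\eta_n)/(2\pi)\|\in[0,\pi]$. By evenness and $2\pi$-periodicity of cosine, $\cos\varphi_n(t)=\cos\alpha$, whereas $\cos(\varphi_n(t)+\omega_n\tau-\eta_n)$ equals $\cos(\alpha+\beta)$ or $\cos(\alpha-\beta)$; hence it is $\ge\cos(\alpha+\beta)$ when $\alpha+\beta\le\pi$ (cosine being decreasing on $[0,\pi]$ and $|\alpha-\beta|\le\alpha+\beta$) and $\ge-1$ otherwise, which are exactly the two branches of $v$. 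Therefore the $n$th contribution is at least $-|a(n)|e^{-\omega_n\Im z_1(t)}\,v\big(2\pi\|\varphi_n(t)/(2\pi)\|,\,2\pi\|(\omega_n\tau-\eta_n)/(2\pi)\|\big)$, and by \eqref{eq:wn(x)-assumption} with $x=\|(\omega_n\tau-\eta_n)/(2\pi)\|\in[0,\tfrac12]$ this lower bound is itself at least $-u(t)\,w_n\big(\|(\omega_n\tau-\eta_n)/(2\pi)\|\big)$, for every $t$. Summing over $n=1,\dots,N$ and invoking \eqref{eq:prop-sum-of-penalties} gives $-u(t)\sum_{n}w_n(\|(\omega_n\tau-\eta_n)/(2\pi)\|)\ge-u(t)$, which proves \eqref{eq:plan-key}. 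The one genuinely delicate point is the closing topological step --- turning ``the angle stays below $\tfrac{\pi}{2}$'' into a precise nonzero count while controlling the change $\eta(1)-\eta(0)$; the trigonometric reductions, though somewhat lengthy, are routine.
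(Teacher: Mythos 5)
Your argument is, in substance, the paper's own proof: the same decomposition of $F(z_{1}(t)+\tau)$ into $F_{N}^{\boldsymbol{\eta}}(z_{1}(t))$, the difference $F_{N}(z_{1}(t)+\tau)-F_{N}^{\boldsymbol{\eta}}(z_{1}(t))$ (your ``middle sum'') and the tail; the same use of the hypothesis on $u(t)$ and of $R_{N}$ for the tail; and the same trigonometric reduction of the $n$th difference term to $\cos\left(\varphi_{n}(t)+\omega_{n}\tau-\eta_{n}\right)-\cos\varphi_{n}(t)$, bounded below by $-v$ and then, via~(\ref{eq:wn(x)-assumption}) and~(\ref{eq:prop-sum-of-penalties}), by $-u(t)w_{n}\left(\left\Vert \frac{\omega_{n}\tau-\eta_{n}}{2\pi}\right\Vert \right)$. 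The analytic core is correct and matches the paper's computation line by line (the paper reaches the same bound by the triangle inequality for $\left\Vert \cdot\right\Vert $ and monotonicity of cosine rather than your case distinction on $\cos(\alpha\pm\beta)$, but that is cosmetic).

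The one step you must repair is the closing topological argument. You compute the total argument change of $z_{2}(t)e^{i\eta(t)}$ as $2\pi k+(\eta(1)-\eta(0))$, which tacitly assumes $\eta$ is continuous; but the proposition allows $\eta$ to be an \emph{arbitrary} function $[0,1]\to[-\tfrac{\pi}{2},\tfrac{\pi}{2}]$, and in the paper's application (Section~\ref{sec:Proof}) $\eta(t)$ is genuinely discontinuous at the nodes $t_{j}$, so the curve $t\mapsto z_{2}(t)e^{i\eta(t)}$ need not be continuous and its winding number is undefined. The fix is the route the paper takes: from $\Re\left(F(z_{1}(t)+\tau)\overline{z_{2}(t)}e^{-i\eta(t)}\right)>0$ and $\left|\eta(t)\right|\leq\tfrac{\pi}{2}$ one deduces that the angle between $F(z_{1}(t)+\tau)$ and $z_{2}(t)$ is strictly less than $\pi$, i.e. $F(z_{1}(t)+\tau)\neq-z_{2}(t)\left|F(z_{1}(t)+\tau)\right|$; hence the straight-line homotopy between $F(z_{1}(t)+\tau)/\left|F(z_{1}(t)+\tau)\right|$ and the continuous closed curve $z_{2}(t)$ avoids $0$, the two curves have the same winding number about $0$, which is at least $1$ for $z_{2}$, and the argument principle yields the zero. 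With that substitution your proof is complete.
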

Condition~(\ref{eq:prop-sum-of-penalties}) defines a subset of the
$N$-dimensional cube 
\begin{equation}
A=\left\{ \left(x_{n}\right)_{n=1}^{N}\in\left[-\tfrac{1}{2},\tfrac{1}{2}\right]^{N}:\sum_{n=1}^{N}w_{n}\left(\left|x_{n}\right|\right)\leq1\right\} \label{eq:solution-set}
\end{equation}
such that a shift $\tau$ satisfies~(\ref{eq:prop-sum-of-penalties})
whenever $\left(\left\Vert \frac{\omega_{n}\tau-\eta_{n}}{2\pi}\right\Vert \right)_{n=1}^{N}\in A$.
The classical tool to count such $\tau$'s comes from the theory of
Diophantine approximation. Using Minkowski convex body theorem (and,
in addition, assuming all the $\eta_{n}$ to be zero and replacing
$w_{n}$ by convex functions) we would get, cf.~\cite{Montgomery1977},
that the density of such $\tau$'s is at least $2^{-N}\operatorname{vol}(A)$,
leading to
\[
\varkappa\geq2^{-N}\delta^{-1}\operatorname{vol}(A),
\]
as we need count $\tau$'s spaced by at least $\delta$ for non-overlapping
contour shifts. The estimate $2^{-N}\operatorname{vol}(A)$ for the
density of $\tau$'s is quite unsatisfactory if we consider that under
the conjecture of linear independence of $(\gamma_{n})$ over the
rationals~\cite{Wintner1935} this density would be $\operatorname{vol}(A)$,
by the Kronecker-Weyl theorem~\cite[Theorem 1 on p. 359, Appendix, §8]{Karatsuba1992}.
Of course, one can relax the hypothesis that the $\gamma_{n}$ are
linearly independent by using a quantitative form of the Kronecker
approximation theorem (see, for example, Theorem 4.1 in~\cite{GonekMontgomery2016}
or Theorem 1 in~\cite{Chen2000}, whereby we only need to suppose
that all linear combinations of $\gamma_{n}$, $1\leq n\leq N$, with
integer coefficients lying in a fixed bounded interval $[-M,M]$ are
not equal to 0. However, in this type of results the density of $\tau$'s
essentially depends on a lower bound for the absolute value of such
combinations, which is, even for small $N$ and $M$, extremely small.
In consequence, previous results allow us to improve the lower bound
announced in~\cite{Kaczorowski1996}, but do not lead to any result
close to Theorem 1. In Theorem~\ref{thm:Tiling-method} we replace
the estimate $2^{-N}\operatorname{vol}(A)$ with the volume $\operatorname{vol}(A')$
of a subset $A'\subset A$ and we are able to make $\operatorname{vol}(A')$
very close to the conjectural value $\operatorname{vol}(A)$ in our
case. The method involves filling the hypercube $[-\frac{1}{2},\frac{1}{2}]^{N}$
with a dense lattice of reachable points, i.e.~points lying in cosets
of the form~$\left(\frac{\omega_{n}\tau-\eta_{n}}{2\pi}\right)_{n=1}^{N}+\mathbb{Z}^{N}$,
$\tau\in\mathbb{R}$. In essence we show that $\left(\frac{\omega_{n}\tau-\eta_{n}}{2\pi}\right)_{n=1}^{N}$
is equidistributed mod $1$ down to a certain scale, determined by
the `granularity' of our lattice.
\begin{thm}
\label{thm:Tiling-method}Let $\boldsymbol{\eta},\boldsymbol{\theta}\in\mathbb{R}^{N}$,
$\boldsymbol{\theta}=(\theta_{1},\dotsc,\theta_{N})$, and let $A\subseteq\left[-\tfrac{1}{2},\tfrac{1}{2}\right]^{N}$
be a compact set. Let $(\boldsymbol{v}_{k})_{k=1}^{N}$ be a basis
of $\mathbb{R}^{N}$ such that there exist real numbers $t_{1},\dotsc,t_{N}$
satisfying $\boldsymbol{v}_{k}\in t_{k}\boldsymbol{\theta}+\mathbb{Z}^{N}$,
$k=1,\dotsc,N$. Let $d_{1},\dotsc,d_{N}$ be positive numbers such
that
\[
\pm\boldsymbol{v}_{1}\pm\dotsc\pm\boldsymbol{v}_{N}\in D:=\prod_{n=1}^{N}[-d_{n},d_{n}],
\]
for each possible combination of $\pm$ symbols. Let
\[
A'=\left\{ \boldsymbol{v}\in A:\boldsymbol{v}+\frac{3}{2}D\subseteq A\right\} .
\]
Then for every $\delta>0$ there exists an increasing sequence of
non-negative numbers $(\tau_{\ell})_{\ell=1}^{\infty}$ such that
for all $\ell$ we have $\tau_{\ell}\boldsymbol{\theta}\in\boldsymbol{\eta}+A+\mathbb{Z}^{N}$
and $\tau_{\ell+1}\geq\tau_{\ell}+\delta$, and 
\[
\liminf_{\ell\to\infty}\frac{\ell}{\tau_{\ell}}\geq\frac{1}{\delta}\operatorname{vol}(A').
\]
\end{thm}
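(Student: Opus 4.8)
The plan is to reduce the statement to a density estimate for the ``valid time set''
\[
U:=\{\tau\ge 0:\tau\boldsymbol\theta\in\boldsymbol\eta+A+\mathbb{Z}^{N}\},
\]
and then extract the required sequence from $U$ by a greedy procedure. Writing $\pi\colon\mathbb{R}^{N}\to\mathbb{T}^{N}:=\mathbb{R}^{N}/\mathbb{Z}^{N}$ for the projection and $\phi(\tau):=\pi(\tau\boldsymbol\theta)$, we have $U=\{\tau\ge 0:\phi(\tau)\in\pi(\boldsymbol\eta+A)\}$, which is closed since $A$ is compact. First I would prove
\[
\liminf_{T\to\infty}\tfrac1T\operatorname{meas}\bigl(U\cap[0,T]\bigr)\ \ge\ \operatorname{vol}(A'),
\]
and then set $\tau_{1}:=\min U$, $\tau_{\ell+1}:=\min\bigl(U\cap[\tau_{\ell}+\delta,\infty)\bigr)$; the minima are attained because $U$ is closed, and $U$ is unbounded once the density estimate is known. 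Then $\tau_{\ell+1}\ge\tau_{\ell}+\delta$ and $U\cap(\tau_{\ell}+\delta,\tau_{\ell+1})=\emptyset$, hence $\operatorname{meas}(U\cap[\tau_{\ell},\tau_{\ell+1}))\le\delta$; summing these bounds gives $\operatorname{meas}(U\cap[0,T])\le\delta\cdot\#\{\ell:\tau_{\ell}\le T\}$, and comparing with the density estimate yields $\liminf_{\ell}\ell/\tau_{\ell}\ge\operatorname{vol}(A')/\delta$. The case $\operatorname{vol}(A')=0$ being trivial, I assume $\operatorname{vol}(A')>0$; then $A'\ne\emptyset$ forces some $\boldsymbol v$ with $\boldsymbol v+\tfrac32D\subseteq A\subseteq[-\tfrac12,\tfrac12]^{N}$, so $d_{n}\le\tfrac13$ for all $n$.

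The heart of the matter is the density estimate, and the engine is a \emph{coarse equidistribution} fact. Put $L:=\sum_{k}\mathbb{Z}\boldsymbol v_{k}$ and $\Gamma:=L+\mathbb{Z}^{N}$. Because the $2^{N}$ points $\pm\boldsymbol v_{1}\pm\dots\pm\boldsymbol v_{N}$ lie in the convex body $D$, so does their convex hull $\{\sum_{k}\varepsilon_{k}\boldsymbol v_{k}:|\varepsilon_{k}|\le 1\}$; halving, the centred fundamental parallelepiped of $L$ lies in $\tfrac12D$, whence $\mathbb{R}^{N}=L+\tfrac12D$ and therefore $\mathbb{T}^{N}=(\Gamma/\mathbb{Z}^{N})+\tfrac12D$. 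On the other hand, writing $\boldsymbol v_{k}=t_{k}\boldsymbol\theta-\boldsymbol m_{k}$ with $\boldsymbol m_{k}\in\mathbb{Z}^{N}$ gives $\pi(\sum_{k}c_{k}\boldsymbol v_{k})=\phi(\sum_{k}c_{k}t_{k})$, so $\Gamma/\mathbb{Z}^{N}$ is contained in the orbit closure $\mathbb{T}_{W}:=\overline{\{\phi(\tau):\tau\in\mathbb{R}\}}$, a subtorus of $\mathbb{T}^{N}$. Combining, $\mathbb{T}^{N}=\mathbb{T}_{W}+\tfrac12D$ — i.e.\ $\mathbb{T}_{W}$ meets every $\tfrac12D$-box; this is exactly what the hypothesis $\pm\boldsymbol v_{k}\in D$ buys, and it is the only use I would make of it. Since $\phi$ is a linear flow with dense orbit on $\mathbb{T}_{W}$, it is equidistributed for the Haar measure $m_{W}$ of $\mathbb{T}_{W}$ (Weyl/Kronecker--Weyl), so for every open $\mathcal O\subseteq\mathbb{T}^{N}$,
\[
\liminf_{T\to\infty}\tfrac1T\operatorname{meas}\{\tau\in[0,T]:\phi(\tau)\in\mathcal O\}\ \ge\ m_{W}(\mathbb{T}_{W}\cap\mathcal O).
\]

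To finish, I would take $\mathcal O:=\pi(\boldsymbol\eta+A'+\operatorname{int}D)$, which is open and, because $A'+D\subseteq A$, satisfies $\mathcal O\subseteq\pi(\boldsymbol\eta+A)$; hence $\{\tau:\phi(\tau)\in\mathcal O\}\subseteq U$, and it suffices to show $m_{W}(\mathbb{T}_{W}\cap\mathcal O)\ge\operatorname{vol}(A')$. Disintegrating Haar measure along $\mathbb{T}^{N}\to\mathbb{T}^{N}/\mathbb{T}_{W}$,
\[
\operatorname{vol}(A')=m_{\mathbb{T}^{N}}\bigl(\pi(\boldsymbol\eta+A')\bigr)=\int_{\mathbb{T}^{N}/\mathbb{T}_{W}}m_{W}\bigl(\pi(\boldsymbol\eta+A')\cap(\mathbb{T}_{W}+\bar{\boldsymbol z})\bigr)\,d\bar{\boldsymbol z},
\]
and by $\mathbb{T}^{N}=\mathbb{T}_{W}+\tfrac12D$ every coset $\mathbb{T}_{W}+\bar{\boldsymbol z}$ has a representative $\boldsymbol\delta\in\tfrac12D$. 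Translating the $\bar{\boldsymbol z}$-slice by $-\boldsymbol\delta$ carries it measure-preservingly into $\mathbb{T}_{W}$, and since $A'-\boldsymbol\delta\subseteq A'+\tfrac12D\subseteq A'+\operatorname{int}D$ (using $\tfrac12D\subseteq\operatorname{int}D$, valid as $d_{n}\le\tfrac13$) the image lies in $\mathbb{T}_{W}\cap\mathcal O$; thus every integrand is $\le m_{W}(\mathbb{T}_{W}\cap\mathcal O)$, and integrating gives the claim.

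I expect the main obstacle to be precisely the point emphasized above: $\phi$ need not be equidistributed in all of $\mathbb{T}^{N}$ (the $\theta_{n}$ may be $\mathbb{Q}$-linearly dependent), so one is forced onto the orbit-closure subtorus $\mathbb{T}_{W}$, and the real content is that $\mathbb{T}_{W}$ still ``sees'' a proportion $\operatorname{vol}(A')$ of $\boldsymbol\eta+A$. The condition $\pm\boldsymbol v_{k}\in D$ provides the coarse denseness $\mathbb{T}^{N}=\mathbb{T}_{W}+\tfrac12D$, and the erosion from $A$ to $A'$ (with its $\tfrac32D$ of slack) is exactly what makes the slice-translation step land back inside $\boldsymbol\eta+A$. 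Everything else — the null-set and measurability bookkeeping (e.g.\ $m_{\mathbb{T}^{N}}(\pi(\boldsymbol\eta+A'))=\operatorname{vol}(A')$), the portmanteau inequality for open sets, unique ergodicity of linear flows on their orbit closures, and the greedy count — is standard and I would dispatch it routinely.
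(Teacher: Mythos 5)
Your proposal is correct, and while it shares the paper's outer skeleton (lower-bound the asymptotic density of the time set $U=\mathcal{T}'=\{\tau:\tau\boldsymbol\theta\in\boldsymbol\eta+A+\mathbb{Z}^{N}\}$, then extract the $\tau_{\ell}$ greedily and count covering intervals of length $\delta$ --- this last step is essentially verbatim the paper's), the engine driving the density bound is genuinely different. The paper argues elementarily and at finite $T$: an averaging (Minkowski-type) argument produces a translate $\tilde{\boldsymbol v}+P'$ of the shrunken cell $P'=(1-\varepsilon)P$ whose visiting-time set $\mathcal T$ has measure $\geq T\operatorname{vol}(P')$, and this time set is then explicitly translated by the return times $\sum\lambda_{n}t_{n}$ attached to the lattice points of $L$ lying deep inside $\boldsymbol\eta+A$, yielding disjoint copies whose number is bounded below by $\operatorname{vol}(A')/\operatorname{vol}(P)$; the error is quantitative ($2M/T$). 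You instead pass to the orbit-closure subtorus $\mathbb{T}_{W}$, invoke Kronecker--Weyl equidistribution there together with the portmanteau inequality for the open set $\mathcal O=\pi(\boldsymbol\eta+A'+\operatorname{int}D)$, and transfer $\operatorname{vol}(A')$ to $m_{W}(\mathbb{T}_{W}\cap\mathcal O)$ by disintegrating Haar measure over $\mathbb{T}^{N}/\mathbb{T}_{W}$ and translating each slice by a coset representative in $\tfrac12D$. The hypothesis $\pm\boldsymbol v_{1}\pm\dotsb\pm\boldsymbol v_{N}\in D$ enters both arguments in the same way ($P\subseteq\tfrac12D$, hence every coset of $\mathbb{T}_{W}$ meets $\pi(\tfrac12D)$), so the geometric content is identical; what differs is that the paper's route is self-contained, elementary and effective, whereas yours is softer (no rate) but conceptually cleaner and, as you note implicitly, uses only $A'+D\subseteq A$ rather than the full $\tfrac32D$ of slack, so it would even tolerate a slightly larger $A'$. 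One small caveat, shared with the paper's own proof: when $A'=\emptyset$ the existence of the infinite sequence $(\tau_{\ell})$ is not actually guaranteed (take $A=\emptyset$), so ``the case $\operatorname{vol}(A')=0$ is trivial'' is not quite accurate for the existence part of the conclusion; when $A'\neq\emptyset$ your argument does give unboundedness of $U$ (the dense orbit in $\mathbb{T}_{W}$ returns to the nonempty open set $\mathbb{T}_{W}\cap\mathcal O$ at arbitrarily large times), so this is a defect of the theorem's phrasing rather than of your proof.
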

This results in $\varkappa\geq\delta^{-1}\operatorname{vol}(A')$.
In addition, $A$ no longer needs to be convex, so we can take the
full solution set (\ref{eq:solution-set}) instead of its convex subset.
Underneath we still use the Minkowski convex body theorem (or its
proof), except we apply it to a tiny parallelotope, derived from the
fundamental parallelotope of our lattice:
\begin{equation}
P=\left\{ \sum_{k=1}^{N}t_{k}\boldsymbol{v}_{k}:t_{k}\in\left[-\tfrac{1}{2},\tfrac{1}{2}\right],k=1,\dotsc,N\right\} ,\label{eq:primitive-cell}
\end{equation}
where $\boldsymbol{v}_{k}$ are lattice generators. Whereas in the
classical argument~\cite[proof of Theorem II in Chapter III]{Cassels1971}
we would find a set of $\tau$'s such that $\left(\theta_{n}\tau\right)_{n=1}^{N}$
lie (mod $1$) in a shifted copy of $\frac{1}{2}A$, so their differences
fall into $\frac{1}{2}A-\frac{1}{2}A=A$, here the differences are
in a copy of $P$ shifted by an element $-v(\tau_{0})\in P$. We additionally
shift this copy by lattice elements contained sufficiently deep inside
$A$, as if affixing tiles. The precise position of tiles is not known
(it depends on $v(\tau_{0})$), but neighbouring tiles are aligned
perfectly next to each other, and so almost fill the solution set.
The losses occur near the border and tend to $0$ as the lattice gets
dense, provided the solution set is regular enough. (It does not need
to be convex). The problem now comes down to generating a dense lattice.
We solve it using a dedicated version of the LLL algorithm, as described
in Section~\ref{sec:Diophantine}. 

A side effect of the tiling method is that we no longer need to find
an actual zero of $F_{N}$. Instead we can freely alter the phase
of each summand of $F_{N}$ and consider a contour around the highest-lying
zero $\xi_{N}^{*}$ of
\[
G_{N}(z)=1-\sum_{n=1}^{N}\left|\frac{\rho_{0}}{\rho_{n}}\right|e^{i(\gamma_{n}-\gamma_{0})z}.
\]
The high-lying zeros seem to work best for us and the imaginary part
of $\xi_{N}^{*}$ is greater than that of any zero of $F_{N}(z)$.
The particular choice of the contours $z_{1}(t)$, $z_{2}(t)$ and
the function $\eta(t)$ for applying Proposition~\ref{prop:sufficient-condition}
was based on the authors' experience with a series of unpublished
numerical experiments, of which the final part we make available~\cite{GKPR}.
Here we only note that $z_{2}(t)=F_{N}^{\boldsymbol{\eta}}(z_{1}(t))/\left|F_{N}^{\boldsymbol{\eta}}(z_{1}(t))\right|$
and $\eta(t)=0$ are natural candidates, for which we can take
\[
u(t)=\left|F_{N}^{\boldsymbol{\eta}}(z_{1}(t))\right|-R_{N}(\Im z_{1}(t))-\varepsilon
\]
for some small $\varepsilon>0$. These choices can be tweaked further,
cf. Section~\ref{sec:Proof}, although it would be hard to search
the space of all possible options to find optimal ones.

\textbf{Acknowledgement. }This research was partially supported by
grant 2021/41/B/ST1/00241 from the National Science Centre, Poland. 

\section{A sufficient condition for zeros}

In this section we prove Proposition~\ref{prop:sufficient-condition}
and provide auxiliary estimates needed for its application. Lemma~\ref{lem:RN-bound}
gives an upper bound for $R_{N}(\Im z_{1}(t))$ in the case $F=F^{*}$,
when the Riemann Hypothesis is true and $N$ is large. Lemma~\ref{lem:v-extremes}
allows us to replace the $\sup$ in~(\ref{eq:wn(x)-assumption})
with $\max$ over a finite set. The latter is essential for checking~(\ref{eq:wn(x)-assumption})
numerically.
\begin{proof}[Proof of Proposition~\ref{prop:sufficient-condition}]
Fix $\tau\in\mathbb{R}$ that satisfies (\ref{eq:prop-sum-of-penalties}).
By the argument principle it is enough to show that 
\[
F(z_{1}(t)+\tau)\neq-z_{2}(t)\left|F(z_{1}(t)+\tau)\right|,\qquad0\leq t\leq1,
\]
which follows from
\[
\Re\left(F(z_{1}(t)+\tau)\overline{z_{2}(t)}e^{-i\eta(t)}\right)>0,\qquad0\leq t\leq1.
\]
We have
\begin{align*}
\Re\left(F(z_{1}(t)+\tau)\overline{z_{2}(t)}e^{-i\eta(t)}\right) & \geq\Re\left(F_{N}(z_{1}(t)+\tau)\overline{z_{2}(t)}e^{-i\eta(t)}\right)-R_{N}(\Im z_{1}(t))\\
 & >\Re\left(\Bigl(F_{N}(z_{1}(t)+\tau)-F_{N}^{\boldsymbol{\eta}}(z_{1}(t))\Bigr)\overline{z_{2}(t)}e^{-i\eta(t)}\right)+u(t),
\end{align*}
so it is enough to show
\begin{equation}
\Re\left(\Bigl(-F_{N}(z_{1}(t)+\tau)+F_{N}^{\boldsymbol{\eta}}(z_{1}(t))\Bigr)\overline{z_{2}(t)}e^{-i\eta(t)}\right)\leq u(t),\qquad0\leq t\leq1.\label{eq:sufficient-condition-u(t)}
\end{equation}
We have

\begin{align*}
\Re\biggl(\Bigl(-F_{N}(z_{1}(t)+\tau) & +F_{N}^{\boldsymbol{\eta}}(z_{1}(t))\Bigr)\overline{z_{2}(t)}e^{-i\eta(t)}\biggr)\\
= & \Re\sum_{n=1}^{N}a(n)e^{i\left(\eta_{n}+\omega_{n}z_{1}(t)-\arg z_{2}(t)-\eta(t)\right)}\left(1-e^{i\left(\omega_{n}\tau-\eta_{n}\right)}\right)\\
= & \sum_{n=1}^{N}\left|a(n)\right|e^{-\omega_{n}\Im z_{1}(t)}\left(\cos\left(\varphi_{n}(t)\right)-\cos\left(\omega_{n}\tau-\eta_{n}+\varphi_{n}(t)\right)\right).
\end{align*}
Since 
\[
\cos\left(\omega_{n}\tau-\eta_{n}+\varphi_{n}(t)\right)=\cos\left(2\pi\left(\left\Vert \frac{\omega_{n}\tau-\eta_{n}+\varphi_{n}(t)}{2\pi}\right\Vert \right)\right)
\]
and
\[
\left\Vert \frac{\omega_{n}\tau-\eta_{n}+\varphi_{n}(t)}{2\pi}\right\Vert \leq\left\Vert \frac{\omega_{n}\tau-\eta_{n}}{2\pi}\right\Vert +\left\Vert \frac{\varphi_{n}(t)}{2\pi}\right\Vert ,
\]
moreover $\cos(x)$ is monotone decreasing on $[0,\pi]$, we have
\[
\cos\left(\omega_{n}\tau-\eta_{n}+\varphi_{n}(t)\right)\geq\cos\left(2\pi\left(\left\Vert \frac{\omega_{n}\tau-\eta_{n}}{2\pi}\right\Vert +\left\Vert \frac{\varphi_{n}(t)}{2\pi}\right\Vert \right)\right)
\]
when $\left\Vert \frac{\omega_{n}\tau-\eta_{n}}{2\pi}\right\Vert +\left\Vert \frac{\varphi_{n}(t)}{2\pi}\right\Vert \leq\frac{1}{2}$
and $\cos\left(\omega_{n}\tau-\eta_{n}+\varphi_{n}(t)\right)\geq-1$
in any case. Hence
\[
\cos\left(\varphi_{n}(t)\right)-\cos\left(\omega_{n}\tau-\eta_{n}+\varphi_{n}(t)\right)\leq v\left(2\pi\left\Vert \frac{\varphi_{n}(t)}{2\pi}\right\Vert ,2\pi\left\Vert \frac{\omega_{n}\tau-\eta_{n}}{2\pi}\right\Vert \right)
\]
and
\begin{align*}
\Re\biggl(\Bigl(-F_{N}(z_{1}(t)+\tau) & +F_{N}^{\boldsymbol{\eta}}(z_{1}(t))\Bigr)\overline{z_{2}(t)}\biggr)\\
\leq & \sum_{n=1}^{N}\left|a(n)\right|e^{-\omega_{n}\Im z_{1}(t)}v\left(2\pi\left\Vert \frac{\varphi_{n}(t)}{2\pi}\right\Vert ,2\pi\left\Vert \frac{\omega_{n}\tau-\eta_{n}}{2\pi}\right\Vert \right).
\end{align*}
This implies~(\ref{eq:sufficient-condition-u(t)}) and the assertion. 
\end{proof}
\begin{lem}
\label{lem:RN-bound}Let $N\geq1$ and $\gamma_{N}<T_{1}<\gamma_{N+1}$.
The function
\begin{equation}
R_{N}^{*}(y)=\sum_{n=N+1}^{\infty}\frac{\left|\rho_{0}\right|}{\left|\rho_{n}\right|}e^{-(\gamma_{n}-\gamma_{0})y},\qquad y>0.\label{eq:RN-star}
\end{equation}
satisfies
\[
R_{N}^{*}(y)<\frac{\left|\rho_{0}\right|e^{\gamma_{0}y}}{T_{1}e^{T_{1}y}}\left(\frac{1}{2\pi}\frac{\log\left(T_{1}\right)}{y}+4\log T_{1}+\frac{2}{T_{1}y}\right).
\]
If $T_{1}y\leq1$, then we also have
\[
R_{N}^{*}(y)<\frac{\left|\rho_{0}\right|e^{\gamma_{0}y}}{e^{T_{1}y}}\left(\frac{1}{4\pi}\left(\log^{2}y\right)+\frac{4\log T_{1}}{T_{1}}+\frac{2}{T_{1}}\right)+\frac{\left|\rho_{0}\right|e^{\gamma_{0}y-1}}{2\pi}\left|\log\left(y\right)\right|.
\]
\end{lem}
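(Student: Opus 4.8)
The plan is to reduce the estimate to a sum over ordinates of the nontrivial zeros of $\zeta$ and then to a weighted integral against the zero-counting function. Since $\rho_n=\tfrac12+i\gamma_n$ gives $|\rho_n|\ge\gamma_n$, and since $n\ge N+1$ forces $\gamma_n\ge\gamma_{N+1}>T_1$, we get $R_N^*(y)\le|\rho_0|\,e^{\gamma_0 y}\,\Sigma(y)$ where $\Sigma(y):=\sum_{\gamma>T_1}\gamma^{-1}e^{-\gamma y}$, the sum over all ordinates of nontrivial zeros exceeding $T_1$, counted with multiplicity (this step uses nothing beyond $|\rho_n|\ge\gamma_n$, in particular not RH). The elementary identity $\gamma^{-1}e^{-\gamma y}=\int_\gamma^\infty\bigl(t^{-2}+yt^{-1}\bigr)e^{-ty}\,dt$, which holds because $t^{-2}+yt^{-1}=-\tfrac{d}{dt}\bigl(t^{-1}e^{-ty}\bigr)$, then lets me sum over $\gamma>T_1$ and interchange sum and integral (legitimate by positivity) to obtain
\[
\Sigma(y)=\int_{T_1}^{\infty}\bigl(t^{-2}+yt^{-1}\bigr)e^{-ty}\,\bigl(N(t)-N(T_1)\bigr)\,dt .
\]
Into this I would feed an explicit Riemann--von Mangoldt bound $\bigl|N(t)-\tfrac{t}{2\pi}\log\tfrac{t}{2\pi}+\tfrac{t}{2\pi}-\tfrac78\bigr|\le E(t)$ with $E(t)$ of order $\log t$ (Backlund, Rosser, Trudgian), obtaining $N(t)-N(T_1)\le\tfrac1{2\pi}\int_{T_1}^{t}\log\tfrac{u}{2\pi}\,du+\mathcal E(t)$ with $\mathcal E(t)=O(\log t)$ absorbing the contributions of $E(t)$ and $E(T_1)$, using $\tfrac{d}{du}\bigl(\tfrac{u}{2\pi}\log\tfrac{u}{2\pi e}\bigr)=\tfrac1{2\pi}\log\tfrac{u}{2\pi}$.

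For the main term, substituting $\tfrac1{2\pi}\int_{T_1}^t\log\tfrac u{2\pi}\,du$ and interchanging once more,
\[
\frac1{2\pi}\int_{T_1}^{\infty}\bigl(t^{-2}+yt^{-1}\bigr)e^{-ty}\int_{T_1}^{t}\log\tfrac{u}{2\pi}\,du\,dt=\frac1{2\pi}\int_{T_1}^{\infty}\frac{e^{-uy}}{u}\,\log\tfrac{u}{2\pi}\,du,
\]
where the inner integral $\int_u^\infty\bigl(t^{-2}+yt^{-1}\bigr)e^{-ty}\,dt=u^{-1}e^{-uy}$ is the identity above once again. Discarding the nonpositive summand $-\tfrac{\log2\pi}{2\pi}\int_{T_1}^\infty u^{-1}e^{-uy}\,du$ and integrating by parts once with $dv=e^{-uy}\,du$, the boundary term equals $\tfrac{\log T_1}{2\pi T_1 y}e^{-T_1 y}$ while the remaining integrand is proportional to $(1-\log u)\,u^{-2}e^{-uy}\le0$ for $u\ge T_1$ (here $T_1>\gamma_0>2\pi>e$). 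Hence the main term is $<\tfrac{\log T_1}{2\pi T_1 y}e^{-T_1 y}$, which, after multiplying by the prefactor $|\rho_0|e^{\gamma_0 y}$, is exactly the first term in the claimed bound.

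For the error term $\int_{T_1}^\infty\bigl(t^{-2}+yt^{-1}\bigr)e^{-ty}\,\mathcal E(t)\,dt$ I would use that $t^{-1}\log t$ and $t^{-2}\log t$ are decreasing on $[T_1,\infty)$ (since $T_1>e$), together with $\int_{T_1}^\infty ye^{-ty}\,dt=e^{-T_1 y}$ and $\int_{T_1}^\infty e^{-ty}\,dt=y^{-1}e^{-T_1 y}$, and one further integration by parts on the $t^{-2}$-piece to shed a factor $\log t$; this collapses the error into terms of the shapes $\tfrac{\log T_1}{T_1}e^{-T_1 y}$ and $\tfrac{1}{T_1^2 y}e^{-T_1 y}$ with absolute constants, which assemble into the $\tfrac{4\log T_1}{T_1}$ and $\tfrac{2}{T_1^2 y}$ parts of the statement. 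For the second bound, valid when $T_1 y\le1$, I split every $t$-integral at $t=1/y$. On $[T_1,1/y]$ I bound $e^{-ty}\le1$, so the main-term integral becomes $\le\tfrac1{2\pi}\int_{T_1}^{1/y}u^{-1}\log u\,du\le\tfrac1{4\pi}\log^2 y$; on $[1/y,\infty)$ the substitution $u=s/y$ turns it into $\tfrac1{2\pi}\int_1^\infty s^{-1}e^{-s}\bigl(\log\tfrac{s}{2\pi}+|\log y|\bigr)ds$, and since the exponential integral $\int_1^\infty s^{-1}e^{-s}\,ds=0.219\ldots<e^{-1}$ this is at most $\tfrac1{2\pi e}|\log y|$ plus a constant; the error pieces, split the same way, furnish the $\tfrac{4\log T_1}{T_1}$ and $\tfrac{2}{T_1}$ terms, their $t\ge1/y$ contributions being bounded and absorbed. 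Multiplying through by $|\rho_0|e^{\gamma_0 y}$ gives the second bound.

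The structural ingredients --- the identity $t^{-2}+yt^{-1}=-\tfrac{d}{dt}(t^{-1}e^{-ty})$, the two Fubini interchanges, and the single extra integration by parts that pins down the leading constant $\tfrac1{2\pi}$ --- are clean, so the genuine work, and the main obstacle, is the explicit bookkeeping: one has to commit to a concrete Riemann--von Mangoldt remainder, control its $\log\log t$ term, and cope with the edge of the range, where $T_1$, although large, can be only slightly above $\gamma_0$ so that the monotonicity thresholds $t>e$ (and $t>\sqrt{e}$) are tight; and one has to organize the various error contributions carefully enough that they actually fit under the rather small constants ($4$ and $2$, respectively $4$, $2$, $\tfrac1{4\pi}$ and $\tfrac1{2\pi e}$) that appear in the statement.
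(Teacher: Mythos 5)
Your overall strategy---partial summation of $\sum_{\gamma>T_{1}}\gamma^{-1}e^{-\gamma y}$ against the zero-counting function together with an explicit Riemann--von Mangoldt remainder---is essentially a re-derivation of Lemma~1 of \cite{Lehman1966}, which is exactly what the paper cites to obtain in one step
\[
R_{N}^{*}(y)\leq\left|\rho_{0}\right|e^{\gamma_{0}y}\left(\frac{1}{2\pi}\int_{T_{1}}^{\infty}\frac{\log(t/2\pi)}{te^{ty}}\,dt+\frac{4\log T_{1}}{T_{1}e^{T_{1}y}}+2\int_{T_{1}}^{\infty}t^{-2}e^{-ty}\,dt\right),
\]
after which only elementary bounds on the two integrals are needed. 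So the route is the same in substance, but you have left the decisive part undone: the constants $4$ and $2$ are precisely the output of Lehman's explicit bookkeeping (a concrete bound for $N(T)-\tfrac{T}{2\pi}\log\tfrac{T}{2\pi e}-\tfrac78$, the $\log\log$ term, the behaviour when $T_{1}$ is only slightly above $\gamma_{0}$), and you explicitly defer all of this, calling it ``the main obstacle.'' Without either invoking Lehman's lemma or actually carrying out that computation, the stated inequalities with these specific constants are not established.

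There are also two concrete problems in your treatment of the second bound (the case $T_{1}y\leq1$). First, on $[T_{1},1/y]$ you bound $e^{-ty}\leq1$, which yields $\frac{1}{4\pi}\log^{2}y$ \emph{without} the factor $e^{-T_{1}y}$ that the lemma requires in front of it; you must instead use $e^{-ty}\leq e^{-T_{1}y}$ there. (The paper takes $T_{2}=1/y$, estimates $e^{-T_{1}y}\int_{T_{1}}^{T_{2}}t^{-1}\log t\,dt=\tfrac12 e^{-T_{1}y}(\log^{2}T_{2}-\log^{2}T_{1})$, and bounds the tail by $\frac{\log T_{2}}{T_{2}}\int_{T_{2}}^{\infty}e^{-ty}\,dt=e^{-1}\left|\log y\right|$, which is exactly where the coefficient $\frac{e^{-1}}{2\pi}$ of $\left|\log y\right|$ comes from.) Second, your tail estimate produces ``$\frac{1}{2\pi e}\left|\log y\right|$ plus a constant,'' and your error pieces on $t\geq1/y$ are ``bounded and absorbed''; the stated bound has no slack for extra positive terms, so you would have to verify these leftovers are nonpositive (the $\log(s/2\pi)$ contribution on $[1,\infty)$ is indeed negative, but this needs to be said, not assumed). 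Both issues are repairable, but as written the second inequality does not follow.
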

\begin{proof}
The series~(\ref{eq:RN-star}) is absolutely convergent when $y>0$.
It follows from~\cite[Lemma 1]{Lehman1966} that
\begin{align*}
R_{N}^{*}(y) & \leq\left|\rho_{0}\right|e^{\gamma_{0}y}\left(\frac{1}{2\pi}\int_{T_{1}}^{\infty}\frac{\log\left(\frac{t}{2\pi}\right)}{te^{ty}}dt+\frac{4\log T_{1}}{T_{1}e^{T_{1}y}}+2\int_{T_{1}}^{\infty}t^{-2}e^{-ty}dt\right)\\
 & =:\left|\rho_{0}\right|e^{\gamma_{0}y}\left(\frac{1}{2\pi}I_{1}+\frac{4\log T_{1}}{T_{1}e^{T_{1}y}}+2I_{2}\right)
\end{align*}
For every $T_{2}>T_{1}$ we have 
\begin{align*}
I_{1} & <e^{-T_{1}y}\int_{T_{1}}^{T_{2}}\frac{\log\left(t\right)}{t}dt+\frac{\log\left(T_{2}\right)}{T_{2}}\int_{T_{2}}^{\infty}e^{-ty}dt\\
 & =\frac{1}{2}e^{-T_{1}y}\left(\log^{2}T_{2}-\log^{2}T_{1}\right)+e^{-T_{2}y}\frac{\log\left(T_{2}\right)}{T_{2}y}.
\end{align*}
For $T_{2}=T_{1}$ we get
\[
I_{1}<e^{-T_{1}y}\frac{\log\left(T_{1}\right)}{T_{1}y}.
\]
If $T_{1}y\leq1$, we can choose $T_{2}=\frac{1}{y}$, so
\[
I_{1}<\frac{1}{2}e^{-T_{1}y}\log^{2}y+e^{-1}\left|\log\left(y\right)\right|.
\]
Moreover we have
\[
I_{2}<T_{1}^{-2}\int_{T_{1}}^{\infty}e^{-ty}dt=T_{1}^{-2}y^{-1}e^{-T_{1}y}
\]
and also
\[
I_{2}<e^{-T_{1}y}\int_{T_{1}}^{\infty}t^{-2}dt=T_{1}^{-1}e^{-T_{1}y}.
\]
\end{proof}
\begin{lem}
\label{lem:v-extremes}Let $0\leq\varphi'\leq\varphi\leq\varphi''\leq\pi$
and $\psi\in[0,\pi]$.
\begin{enumerate}
\item [$(i)$] If $\varphi''\leq\frac{\pi}{2}$, then
\[
v(\varphi,\psi)\leq\left(\cos\left(\frac{\varphi''-\varphi'}{2}\right)\right)^{-1}\max\left(v(\varphi',\psi),v(\varphi'',\psi)\right).
\]
\item [$(ii)$] If $\varphi'\geq\frac{\pi}{2}$, then
\[
v(\varphi,\psi)\leq v(\varphi',\psi).
\]
\item [$(iii)$] If $\varphi'\leq\frac{\pi}{2}\leq\varphi''$, then
\[
v(\varphi,\psi)\leq\left(\cos\left(\frac{\frac{\pi}{2}-\varphi'}{2}\right)\right)^{-1}\max\left(v(\varphi',\psi),v(\tfrac{\pi}{2},\psi)\right).
\]
\end{enumerate}
\end{lem}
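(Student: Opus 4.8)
The plan is to pin down the one-variable function $\varphi\mapsto v(\varphi,\psi)$ (for fixed $\psi\in[0,\pi]$) so that all three parts follow from monotonicity together with one sharp lower bound. I would begin with the unified formula $v(\varphi,\psi)=\cos\varphi-\cos\bigl(\min(\varphi+\psi,\pi)\bigr)$ (the two cases in the definition of $v$ are exactly the two cases of the $\min$), and put $\varphi^{*}=\tfrac12(\pi-\psi)$, so that $\varphi^{*}\in[0,\tfrac{\pi}{2}]$ and $\sin\tfrac{\psi}{2}=\cos\varphi^{*}$. Then I would establish two elementary facts. (a)~$v(\cdot,\psi)$ is non-decreasing on $[0,\varphi^{*}]$ and non-increasing on $[\varphi^{*},\pi]$, with maximal value $v(\varphi^{*},\psi)=2\sin\tfrac{\psi}{2}$: on $[0,\pi-\psi]$ one has $v(\varphi,\psi)=2\sin\tfrac{\psi}{2}\sin(\varphi+\tfrac{\psi}{2})$, whose second factor increases on $[0,\varphi^{*}]$ and decreases on $[\varphi^{*},\pi-\psi]$ (the argument passing through $\tfrac{\pi}{2}$ at $\varphi^{*}$), while on $[\pi-\psi,\pi]$ one has $v(\varphi,\psi)=1+\cos\varphi$, decreasing; the two formulas agree at $\pi-\psi$, giving the stated monotonicity and the maximum. (b)~For every $\varphi\in[0,\pi]$, $v(\varphi,\psi)\ge 2\sin\tfrac{\psi}{2}\cos(\varphi-\varphi^{*})$: by product-to-sum, $2\sin\tfrac{\psi}{2}\cos(\varphi-\varphi^{*})=2\cos\varphi^{*}\cos(\varphi-\varphi^{*})=\cos\varphi+\cos(2\varphi^{*}-\varphi)=\cos\varphi-\cos(\varphi+\psi)$, and the last quantity equals $v(\varphi,\psi)$ when $\varphi+\psi\le\pi$ and is $\le v(\varphi,\psi)=1+\cos\varphi$ when $\varphi+\psi>\pi$ since $\cos(\varphi+\psi)\ge-1$.

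Granting (a) and (b), part~$(ii)$ is immediate: $\varphi\ge\varphi'\ge\tfrac{\pi}{2}\ge\varphi^{*}$ places both $\varphi'$ and $\varphi$ in the range where $v(\cdot,\psi)$ is non-increasing. For part~$(i)$, if $\varphi^{*}\notin(\varphi',\varphi'')$ then $v(\cdot,\psi)$ is monotone on $[\varphi',\varphi'']$ by (a), so $v(\varphi,\psi)\le\max\bigl(v(\varphi',\psi),v(\varphi'',\psi)\bigr)$, which is even stronger than the assertion since $\bigl(\cos\tfrac{\varphi''-\varphi'}{2}\bigr)^{-1}\ge1$. In the remaining case $\varphi'<\varphi^{*}<\varphi''$, (a) gives $v(\varphi,\psi)\le 2\sin\tfrac{\psi}{2}$, while applying (b) at $\varphi'$ and $\varphi''$ (and using that $\cos$ is even, that $\varphi^{*}-\varphi'$ and $\varphi''-\varphi^{*}$ both lie in $[0,\tfrac{\pi}{2}]$, where $\cos$ is decreasing) gives
\[
\max\bigl(v(\varphi',\psi),v(\varphi'',\psi)\bigr)\ \ge\ 2\sin\tfrac{\psi}{2}\,\cos\bigl(\min(\varphi^{*}-\varphi',\,\varphi''-\varphi^{*})\bigr).
\]
Since $\min(\varphi^{*}-\varphi',\varphi''-\varphi^{*})\le\tfrac12\bigl((\varphi^{*}-\varphi')+(\varphi''-\varphi^{*})\bigr)=\tfrac12(\varphi''-\varphi')\le\tfrac{\pi}{4}$, the right side is at least $2\sin\tfrac{\psi}{2}\cos\tfrac{\varphi''-\varphi'}{2}\ge v(\varphi,\psi)\cos\tfrac{\varphi''-\varphi'}{2}$, and dividing by $\cos\tfrac{\varphi''-\varphi'}{2}>0$ yields $(i)$. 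Finally $(iii)$ follows from the earlier parts: if $\varphi\le\tfrac{\pi}{2}$, apply $(i)$ to the triple $(\varphi',\varphi,\tfrac{\pi}{2})$; if $\varphi\ge\tfrac{\pi}{2}$, then $\varphi$ and $\tfrac{\pi}{2}$ both lie in $[\varphi^{*},\pi]$, so $v(\varphi,\psi)\le v(\tfrac{\pi}{2},\psi)$ by (a), and once more the factor is $\ge1$.

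The one point requiring care is that the constant in $(i)$ is sharp: equality holds when $\varphi^{*}$ is the midpoint of $[\varphi',\varphi'']$ and $\varphi=\varphi^{*}$ (then $v(\varphi',\psi)=v(\varphi'',\psi)=2\sin\tfrac{\psi}{2}\cos\tfrac{\varphi''-\varphi'}{2}$, with equality throughout (b)). Consequently the lower bound in (b) must be kept in the precise form $2\sin\tfrac{\psi}{2}\cos(\varphi-\varphi^{*})$---a coarser estimate for $v$ would destroy the constant---and one must check that the angles entering the argument stay within $[0,\tfrac{\pi}{2}]$ before invoking monotonicity of $\cos$. I expect verifying (b) on the branch $\varphi+\psi>\pi$ (i.e.\ that $1+\cos\varphi\ge\cos\varphi-\cos(\varphi+\psi)$) and this angle bookkeeping to be the only mildly delicate points; the rest is routine trigonometry.
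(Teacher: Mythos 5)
Your proposal is correct and follows essentially the same route as the paper: both identify the critical point $\varphi^{*}=\tfrac{\pi-\psi}{2}$, use the identity $\cos\varphi-\cos(\varphi+\psi)=2\sin\tfrac{\psi}{2}\sin\bigl(\varphi+\tfrac{\psi}{2}\bigr)$ to bound the interior maximum by $2\sin\tfrac{\psi}{2}$ and the endpoint values from below by $2\sin\tfrac{\psi}{2}\cos\bigl(\tfrac{\varphi''-\varphi'}{2}\bigr)$, and deduce $(ii)$ from monotonicity of $v(\cdot,\psi)$ on $[\tfrac{\pi}{2},\pi]$ and $(iii)$ from $(i)$ and $(ii)$. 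Your packaging via the explicit monotonicity profile and the pointwise lower bound (b) is a slightly more systematic rendering of the paper's "supremum at an endpoint or at the critical point" argument, but the substance is identical.
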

\begin{proof}
$(ii)$ follows from
\[
\frac{\partial}{\partial\varphi}v(\varphi,\psi)=-\sin(\varphi)+\sin\left(\min(\varphi+\psi,\pi)\right)\leq0,\qquad\frac{\pi}{2}\leq\varphi\leq\pi,
\]
and $(iii)$ follows from $(i)$ and $(ii)$. To see $(i)$ note that
$v(\varphi,\psi)$ attains the value $\sup_{\varphi'\leq\varphi\leq\varphi''}v(\varphi,\psi)$
at the endpoints $\varphi=\varphi'$ or $\varphi=\varphi''$ or when
$\frac{\partial}{\partial\varphi}v(\varphi,\psi)=0$. In the first
two cases the assertion holds and the third one can only occur for
$\varphi=\frac{\pi-\psi}{2}$ when $\varphi'\leq\frac{\pi-\psi}{2}\leq\varphi''$.
In that case we have $\varphi+\psi\leq\pi$, so
\[
v(\varphi,\psi)=\cos(\varphi)-\cos(\varphi+\psi)=2\sin\left(\frac{\psi}{2}\right)\sin\left(\varphi+\frac{\psi}{2}\right)\leq2\sin\left(\frac{\psi}{2}\right).
\]
Similarly
\[
v(\varphi',\psi)=\cos(\varphi')-\cos(\varphi'+\psi)=2\sin\left(\frac{\psi}{2}\right)\sin\left(\varphi'+\frac{\psi}{2}\right)
\]
and
\[
v(\varphi'',\psi)\geq\cos(\varphi'')-\cos(\varphi''+\psi)=2\sin\left(\frac{\psi}{2}\right)\sin\left(\varphi''+\frac{\psi}{2}\right).
\]
We have
\begin{align*}
\max\left(\sin\left(\varphi'+\frac{\psi}{2}\right),\sin\left(\varphi''+\frac{\psi}{2}\right)\right) & =\max\left(\cos\left(\varphi'-\varphi\right),\cos\left(\varphi''-\varphi\right)\right)\\
 & =\cos\left(\min\left(\left|\varphi'-\varphi\right|,\left|\varphi''-\varphi\right|\right)\right)\\
 & \geq\cos\left(\frac{\varphi''-\varphi'}{2}\right).
\end{align*}
\end{proof}

\section{Diophantine approximation results\label{sec:Diophantine}}

\begin{algorithm}
\begin{pseudo}[label=\small\arabic*, line-height=1.2]*&

\textbf{$\!\!\!\!\!\!\!\!\!\!\!\!$Initial conditions:}

\\*&Input vectors $\boldsymbol{u}_{1},\dotsc,\boldsymbol{u}_{N}$
are linearly dependent over $\mathbb{R}$, but not over $\mathbb{Q}$

\\*&$c_{k,j}=1$ when $k\==j$, otherwise $c_{k,j}=0$ \ct{coefficients}

\\*&$\boldsymbol{v}_{k}=\sum_{j=1}^{N}c_{k,j}\boldsymbol{u}_{j}=\boldsymbol{u}_{k}$
for $k=1,\dotsc,N$

\\*&$\delta=\frac{1}{4}$

\\*&$M$ is a large integer \ct{coefficients bound}\vspace{0.6ex}

\\*&\textbf{$\!\!\!\!\!\!\!\!\!\!$Subroutine} \pr{Update}$(\ell,m)$:

\\ \kw{for} $a=1,\dotsc,\ell-1$

\\+		\kw{for} $b=\ell,\dotsc,m$

\\+			$\mu_{b,a}=\left(\boldsymbol{v}_{b}\cdot\boldsymbol{v}_{a}^{*}\right)/B_{a}$

\\-- \kw{for} $a=\ell,\dotsc,N$

\\+	$\boldsymbol{v}_{a}^{*}=\boldsymbol{v}_{a}-\sum_{b=1}^{a-1}\mu_{a,b}\boldsymbol{v}_{b}^{*}$

\\	$B_{a}=v_{a}^{*}\cdot v_{a}^{*}$

\\		\kw{for} $b=a+1,\dotsc,N$

\\+			$\mu_{b,a}=\left(\boldsymbol{v}_{b}\cdot\boldsymbol{v}_{a}^{*}\right)/B_{a}$\vspace{0.6ex}

\\--*&\textbf{$\!\!\!\!\!\!\!\!\!\!$Main Routine:}

\\ \pr{Update}$(1,N)$

\\$k=2$

\\ \kw{until} $k\==N+1$

\\+	\kw{for} $j=k-1,\dotsc,1$

\\+		$q=\lfloor\mu_{k,j}\rceil$

\\		\kw{if} $\max_{1\leq i\leq N}\left|c_{k,i}-qc_{j,i}\right|>M$

\\+			\kw{return} $\left(c_{k,j}\right)_{\substack{1\leq k\leq N\\
1\leq j\leq N
}
}$

\\-		\kw{for} $i=1,\dotsc,N$

\\+			$c_{k,i}=c_{k,i}-qc_{j,i}$

\\-		$\boldsymbol{v}_{k}=\sum_{i=1}^{N}c_{k,i}\boldsymbol{u}_{i}$

\\		\pr{Update}$(k,k)$

\\-	\kw{if} $B_{k}\geq\left(\delta-\mu_{k,k-1}^{2}\right)B_{k-1}$

\\+		$k=k+1$

\\-	\kw{else}

\\+		Swap $c_{k-1,i}$ and $c_{k,i}$ for $i=1,\dotsc,N$

\\		Swap $\boldsymbol{v}_{k-1}$ and $\boldsymbol{v}_{k}$

\\		\pr{Update}$(k-1,k)$

\\		$k=\max(2,k-1)$

\\-- \kw{return} $\left(c_{k,j}\right)_{\substack{1\leq k\leq N\\
1\leq j\leq N
}
}$

\end{pseudo}

\caption{\label{alg:LLL-bounded}Generating small integer linear combinations
of $u_{1},\dotsc,u_{N}$}

\end{algorithm}

In this section we prove Theorem~\ref{thm:Tiling-method}, but first
we explain how we construct the vectors $v_{k}$ needed for its application.
Let $\boldsymbol{u}_{1},\dotsc,\boldsymbol{u}_{N}$ denote the projections
of unit vectors $\boldsymbol{e}_{1},\dotsc,\boldsymbol{e}_{N}$, the
canonical basis of $\mathbb{R}^{N}$, onto the orthogonal space of
the vector $\left(\theta_{k}\right)_{k=1}^{N}$. Proposition~\ref{prop:Tiling}
below reduces the problem of constructing $\boldsymbol{v}_{k}$'s
to finding appropriate small linear combinations of $\boldsymbol{u}_{1},\dotsc,\boldsymbol{u}_{N}$,
with integer coefficients. We achieve the latter by applying Algorithm~\ref{alg:LLL-bounded}
to floating-point approximations of $\boldsymbol{u}_{1},\dotsc,\boldsymbol{u}_{N}$
(with 1000 decimal digits). As usual, $\lfloor x\rceil$ denotes the
integer closest to $x$, in particular $\left\Vert x\right\Vert =\left|x-\lfloor x\rceil\right|$.
Algorithm~\ref{alg:LLL-bounded} is a version of the LLL algorithm~\cite[p. 444]{HoffsteinPipherSilverman2014}
with the following differences:
\begin{itemize}
\item if the next reduction should result in coefficients greater than a
given limit $M$, the computation ends (Steps 14--15),
\item we keep track of the integer coefficients $c_{k,j}$; the reduced
vectors and the dependent data are re-computed from $c_{k,j}$ whenever
necessary, to prevent uncontrolled accumulation of round-off errors,
\item we use the value of $\delta=\frac{1}{4}$, which would not work well
with the standard LLL algorithm.
\end{itemize}
The procedure stops after reaching the coefficients bound or with
a division by zero attempt. In practice the latter does not occur.
We do not prove sufficient conditions for Algorithm~\ref{alg:LLL-bounded}
to succeed. Instead we use it as a heuristic tool to find the coefficients
$c_{k,j}$. It is in the computational part of the proof of Theorem~\ref{thm:Main-Theorem}
when we verify rigorously that our coefficients satisfy the requirements
of Proposition~\ref{prop:Tiling}. It is surprising that Algorithm~\ref{alg:LLL-bounded}
seems to work perfectly well with $\delta=\frac{1}{4}$, and in fact
the computation time is much shorter than for $\delta\geq\frac{3}{4}$,
required in the classical LLL algorithm. We did not study this phenomenon,
but we did make use of it. We proceed with the proofs of Proposition~\ref{prop:Tiling}
and Theorem~\ref{thm:Tiling-method}.
\begin{prop}
\label{prop:Tiling}Let $\boldsymbol{\eta},\boldsymbol{\theta}\in\mathbb{R}^{N}$,
$\boldsymbol{\theta}=(\theta_{1},\dotsc,\theta_{N})\neq(0,\dotsc,0)$,
and let $A\subseteq\left[-\tfrac{1}{2},\tfrac{1}{2}\right]^{N}$ be
a compact set. Put
\[
\boldsymbol{u}_{j}=\boldsymbol{e}_{j}-\mu_{j}\boldsymbol{\theta},\qquad\mu_{j}=\frac{\theta_{j}}{\boldsymbol{\theta}\cdot\boldsymbol{\theta}},\qquad j=1,\dotsc,N.
\]
Let $C=\left(c_{k,j}\right)_{\substack{1\leq k\leq N\\
1\leq j\leq N
}
}$ be a non-singular $N\times N$ matrix with integer coefficients.
Let $d_{1},\dotsc,d_{N}$ be positive numbers such that for every
$(\epsilon_{k})_{k=1}^{N}\in\left\{ -1,1\right\} ^{N}$ we have
\begin{equation}
\sum_{k=1}^{N}\epsilon_{k}\sum_{j=1}^{N}c_{k,j}\boldsymbol{u}_{j}\in D:=\prod_{n=1}^{N}(-d_{n},d_{n}).\label{eq:vertices-inside-open-D}
\end{equation}
Let
\[
A'=\left\{ \boldsymbol{v}\in A:\boldsymbol{v}+\frac{3}{2}D\subseteq A\right\} .
\]
Then for every $\delta>0$ there exists an increasing sequence of
non-negative numbers $(\tau_{\ell})_{\ell=1}^{\infty}$ such that
for all $\ell$ we have $\left(\theta_{n}\tau_{\ell}\right)_{n=1}^{N}\in\boldsymbol{\eta}+A+\mathbb{Z}^{N}$
and $\tau_{\ell+1}\geq\tau_{\ell}+\delta$, and 
\[
\liminf_{\ell\to\infty}\frac{\ell}{\tau_{\ell}}\geq\frac{1}{\delta}\operatorname{vol}(A').
\]
\end{prop}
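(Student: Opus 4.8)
The plan is to realize the vectors $\boldsymbol{v}_k=\sum_{j}c_{k,j}\boldsymbol{u}_j$ as a lattice sitting in the hyperplane $H=\boldsymbol{\theta}^\perp$, and then apply Theorem~\ref{thm:Tiling-method}. First I would record the basic geometric fact underlying the choice of $\boldsymbol{u}_j$: since $\boldsymbol{u}_j=\boldsymbol{e}_j-\mu_j\boldsymbol{\theta}$ with $\mu_j=\theta_j/(\boldsymbol{\theta}\cdot\boldsymbol{\theta})$, each $\boldsymbol{u}_j$ is exactly the orthogonal projection of $\boldsymbol{e}_j$ onto $H$, so every $\boldsymbol{u}_j$ lies in $H$, and moreover $\boldsymbol{u}_j\in -\mu_j\boldsymbol{\theta}+\mathbb{Z}^N$. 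Consequently $\boldsymbol{v}_k=\sum_j c_{k,j}\boldsymbol{u}_j\in t_k\boldsymbol{\theta}+\mathbb{Z}^N$ with $t_k=-\sum_j c_{k,j}\mu_j$; this is precisely the membership hypothesis of Theorem~\ref{thm:Tiling-method}. The non-singularity of $C$ ensures that $(\boldsymbol{v}_k)_{k=1}^N$ spans the same $N$-dimensional space as $(\boldsymbol{u}_j)$ — but here is the one genuine subtlety: the $\boldsymbol{u}_j$ lie in the $(N-1)$-dimensional hyperplane $H$, so they are linearly \emph{dependent} over $\mathbb{R}$, hence $(\boldsymbol{v}_k)_{k=1}^N$ is \emph{not} a basis of $\mathbb{R}^N$, contrary to the literal hypothesis of Theorem~\ref{thm:Tiling-method}.

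The main obstacle, then, is bridging this dimension gap: Theorem~\ref{thm:Tiling-method} is stated for a genuine basis of $\mathbb{R}^N$, but Proposition~\ref{prop:Tiling} supplies only $N$ vectors spanning an $(N-1)$-space. I expect the resolution is that the proof of Theorem~\ref{thm:Tiling-method} never actually uses invertibility of the full $N\times N$ matrix $[\boldsymbol{v}_1|\cdots|\boldsymbol{v}_N]$ — what it uses is that the $\tau\boldsymbol{\theta}\bmod\mathbb{Z}^N$ orbit, reduced modulo the lattice $L=\sum_k\mathbb{Z}\boldsymbol{v}_k$, hits a dense set of cosets, and the relevant "parallelotope" $P$ from~\eqref{eq:primitive-cell} is a cell in the hyperplane direction; the missing direction is supplied by the flow direction $\boldsymbol{\theta}$ itself. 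So the honest thing to do is either (a) invoke Theorem~\ref{thm:Tiling-method} after adjoining $\boldsymbol{\theta}$ (suitably scaled) as an $N$th basis vector — note $\boldsymbol{\theta}\in 1\cdot\boldsymbol{\theta}+\mathbb{Z}^N$ trivially after subtracting its integer part, and then passing to $\boldsymbol{v}'_k$ a unimodular recombination — or (b) cite the version of the argument in Section~\ref{sec:Diophantine} directly. I would go with a short lemma: extend $(\boldsymbol{v}_1,\dots,\boldsymbol{v}_N)$ to a basis of $\mathbb{R}^N$ by replacing one of them by a vector of the form $t\boldsymbol{\theta}+(\text{integer vector})$ chosen so that the new collection is still a $\mathbb{Z}$-basis of a lattice and still satisfies the $\pm$-sum-in-$D$ condition with the same $d_n$ (or $d_n$ slightly enlarged — harmless since we may shrink $D$ below the margin allowed by~\eqref{eq:vertices-inside-open-D} being an \emph{open} box, which is why Proposition~\ref{prop:Tiling} uses open intervals while Theorem~\ref{thm:Tiling-method} uses closed ones).

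Having dealt with that, the remaining steps are bookkeeping. I would verify: (1) the set $A'$ in Proposition~\ref{prop:Tiling} equals the set $A'$ in Theorem~\ref{thm:Tiling-method} for the matching data (same $A$, same $D$), using that $\boldsymbol{v}+\tfrac32 D\subseteq A$ is the defining condition in both; the switch from the open box $D$ in~\eqref{eq:vertices-inside-open-D} to the closed box in Theorem~\ref{thm:Tiling-method} only \emph{helps}, since any closed sub-box $\prod[-d_n',d_n']$ with $d_n'<d_n$ still contains all the $\pm$-sums and gives $A'\supseteq$ the Proposition's $A'$, so the conclusion is at least as strong; to get exactly $\mathrm{vol}(A')$ one lets $d_n'\uparrow d_n$ and uses that the $A'$-volumes converge (monotonicity plus the compactness of $A$). (2) Feed $\boldsymbol{\eta}$, $\boldsymbol{\theta}$, $A$, the extended basis, and the $d_n'$ into Theorem~\ref{thm:Tiling-method}: it produces the increasing sequence $(\tau_\ell)$ with $\tau_\ell\boldsymbol{\theta}\in\boldsymbol{\eta}+A+\mathbb{Z}^N$, $\tau_{\ell+1}\ge\tau_\ell+\delta$, and $\liminf_\ell \ell/\tau_\ell\ge\delta^{-1}\mathrm{vol}(A')$, which is exactly the assertion of Proposition~\ref{prop:Tiling} once we note $\tau\boldsymbol{\theta}=(\theta_n\tau)_{n=1}^N$. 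That closes the proof, modulo filling in the one-line lattice-extension lemma, which I anticipate is the only place real care is needed.
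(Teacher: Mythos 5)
Your proposal is correct and matches the paper's own argument: the paper resolves the dimension gap exactly as you anticipate, by renumbering so that $\boldsymbol{v}_{1},\dotsc,\boldsymbol{v}_{N-1}$ are linearly independent, replacing $\boldsymbol{v}_{N}$ with $\varepsilon\boldsymbol{\theta}$ for $\varepsilon$ small enough that the $\pm$-sums stay inside the open box $D$, and then invoking Theorem~\ref{thm:Tiling-method} with $t_{k}=-\sum_{j}c_{k,j}\mu_{j}$ and $t_{N}=\varepsilon$. The openness of $D$ in~(\ref{eq:vertices-inside-open-D}) is indeed the margin that absorbs the perturbation, just as you identified.
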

\begin{proof}
Let $\boldsymbol{v}_{k}=\sum_{j=1}^{N}c_{k,j}\boldsymbol{u}_{j}$
for $k=1,\dotsc,N$. Since $C$ is non-singular, the vectors $\boldsymbol{v}_{k}$,
$k=1,\dotsc,N$, span the same $(N-1)$-dimensional subspace as the
vectors $\boldsymbol{u}_{j}$, $j=1,\dotsc,N$, i.e. the orthogonal
space of $\boldsymbol{\varTheta}$. Hence we can choose $N-1$ linearly
independent vectors from the $\boldsymbol{v}_{k}$'s, say $\boldsymbol{v}_{1},\dotsc,\boldsymbol{v}_{N-1}$,
possibly after renumbering. We replace $\boldsymbol{v}_{N}$ with
$\varepsilon\boldsymbol{\varTheta}$, for any $\varepsilon>0$, and
we obtain a basis $\boldsymbol{v}_{1},\dotsc,\boldsymbol{v}_{N}$
of $\mathbb{R}^{N}$. Let $\varepsilon$ be sufficiently small, so
that $\varepsilon\max_{n}\left|\theta_{n}\right|$ is less than the
smallest distance of $\sum_{k=1}^{N}\epsilon_{k}\sum_{j=1}^{N}c_{k,j}\boldsymbol{u}_{j}$
to the boundary of $D$ in~(\ref{eq:vertices-inside-open-D}) (here
$D$ is open). Then we have
\[
\pm\boldsymbol{v}_{1}\pm\dotsc\pm\boldsymbol{v}_{N}\in D.
\]
Since
\[
\boldsymbol{v}_{k}=\sum_{j=1}^{N}c_{k,j}\left(\boldsymbol{e}_{j}-\mu_{j}\boldsymbol{\theta}\right)\in-\left(\sum_{j=1}^{N}c_{k,j}\mu_{j}\right)\boldsymbol{\theta}+\mathbb{Z}^{N},\qquad k=1,\dotsc,N-1,
\]
and $\boldsymbol{v}_{N}=\varepsilon\boldsymbol{\theta}$, the assertion
follows by Theorem~\ref{thm:Tiling-method}.
\end{proof}

\subsection*{Proof of Theorem~\ref{thm:Tiling-method}}

Let $\boldsymbol{\theta}=(\theta_{n})_{n=1}^{N}$ again, moreover
we take $T>0$ large and $\varepsilon>0$ small. Let $P$ be as in(\ref{eq:primitive-cell})
and $P'=(1-\varepsilon)P$. We can assume $d_{n}<\frac{1}{3}$ for
$n=1,\dotsc,N$, as otherwise $\operatorname{vol}(A')=0$. Hence $P'\subseteq\left(-\tfrac{1}{6},\tfrac{1}{6}\right)^{N}$.
We have
\begin{multline*}
\int_{\boldsymbol{v}\in(\mathbb{R}/\mathbb{Z})^{N}}\left|\left\{ t\in[0,T]:t\boldsymbol{\theta}\in\boldsymbol{v}+P'+\mathbb{Z}^{N}\right\} \right|\,d\boldsymbol{v}=\\
=\int_{0}^{T}\operatorname{vol}\left\{ \boldsymbol{v}\in(\mathbb{R}/\mathbb{Z})^{N}:t\boldsymbol{\theta}\in\boldsymbol{v}+P'+\mathbb{Z}^{N}\right\} \,dt\\
=T\operatorname{vol}\left(P'\right),
\end{multline*}
where $\left|\cdot\right|$ denotes the 1-dimensional Lebesgue measure.
Therefore there exists a $\tilde{\boldsymbol{v}}\in\mathbb{R}^{N}$
such that the set
\[
\mathcal{T}=\left\{ t\in[0,T]:t\boldsymbol{\theta}\in\tilde{\boldsymbol{v}}+P'+\mathbb{Z}^{N}\right\} 
\]
satisfies $\left|\mathcal{T}\right|\geq T\operatorname{vol}\left(P'\right)$.
The set $\mathcal{T}$ is closed, as a preimage of a closed set under
a continuous mapping, so let $t_{0}=\min\mathcal{T}$ and $\boldsymbol{v}_{0}=\tilde{\boldsymbol{v}}-t_{0}\boldsymbol{\theta}$.
We have 
\[
\boldsymbol{v}_{0}\in\tilde{\boldsymbol{v}}-(\tilde{\boldsymbol{v}}+P'+\mathbb{Z}^{N})=P'+\mathbb{Z}^{N}.
\]
Consider the set of lattice points deep inside the set $\boldsymbol{\eta}+A$,
and the corresponding shifts of $\mathcal{T}$:
\[
L=\left\{ \boldsymbol{v}=\sum_{n=1}^{N}\lambda_{n}\boldsymbol{v}_{n}:\boldsymbol{v}+D\subseteq\boldsymbol{\eta}+A\text{ and }\lambda_{n}\in\mathbb{Z}\text{ for }n=1,\dotsc,N\right\} ,
\]
\[
\mathcal{T}_{\boldsymbol{v}}=-t_{0}+\sum_{n=1}^{N}\lambda_{n}t_{n}+\mathcal{T},\qquad\boldsymbol{v}=\sum_{n=1}^{N}\lambda_{n}\boldsymbol{v}_{n}\in L.
\]
For every $t'\in\mathcal{T}_{\boldsymbol{v}}$, $\boldsymbol{v}=\sum_{n=1}^{N}\lambda_{n}\boldsymbol{v}_{n}$,
we have $t'=t-t_{0}+\sum_{n=1}^{N}\lambda_{n}t_{n}$ for some $t\in\mathcal{T}$,
so 
\[
t'\boldsymbol{\theta}=\boldsymbol{v}+t\boldsymbol{\theta}-t_{0}\boldsymbol{\theta}\in\boldsymbol{v}+\tilde{\boldsymbol{v}}-t_{0}\boldsymbol{\theta}+P'+\mathbb{Z}^{N}=\boldsymbol{v}+\boldsymbol{v}_{0}+P'+\mathbb{Z}^{N}.
\]
Since the sets $\boldsymbol{v}+\boldsymbol{v}_{0}+P'+\mathbb{Z}^{N}$,
$\boldsymbol{v}\in L$, are pairwise disjoint, so are the sets $\mathcal{T}_{\boldsymbol{v}}$,
so their union satisfies
\[
\left|\bigcup_{\boldsymbol{v}\in L}\mathcal{T}_{\boldsymbol{v}}\right|\geq T\operatorname{vol}\left(P'\right)\cdot\#L.
\]
We need to estimate $\#L$ from below. For $\boldsymbol{x}\in\boldsymbol{\eta}+A'$
let $\boldsymbol{v}=\sum_{n=1}^{N}\lambda_{n}\boldsymbol{v}_{n}$
be such that $\lambda_{n}\in\mathbb{Z}$ for $n=1,\dotsc,N$, and
$\boldsymbol{x}\in\boldsymbol{v}+P$. Then $\boldsymbol{v}\in\boldsymbol{x}+P\subseteq\boldsymbol{x}+\tfrac{1}{2}D$,
so $\boldsymbol{v}+D\subseteq\boldsymbol{x}+\tfrac{3}{2}D\subseteq\boldsymbol{\eta}+A$.
Hence $\boldsymbol{v}\in L$. Since we can choose $\boldsymbol{x}\in\boldsymbol{\eta}+A'$
arbitrarily, we obtain
\[
\boldsymbol{\eta}+A'\subseteq\bigcup_{\boldsymbol{v}\in L}\left(\boldsymbol{v}+P\right),
\]
hence $\#L\geq\operatorname{vol}\left(A'\right)/\operatorname{vol}\left(P\right)$.
Let $M$ denote the maximum of $\left|\sum_{n=1}^{N}\lambda_{n}t_{n}\right|$
over $\left(\lambda_{n}\right)\in\mathbb{Z}^{N}$ such that $\sum_{n=1}^{N}\lambda_{n}\boldsymbol{v}_{n}\in L$,
and let
\[
\mathcal{T}'=\left\{ t\in\mathbb{R}:t\boldsymbol{\theta}\in\boldsymbol{\eta}+A+\mathbb{Z}^{N}\right\} .
\]
Note that $\mathcal{T}'$ is also closed and it does not depend on
the choice of $T$ and $\varepsilon$. We have $\bigcup_{\boldsymbol{v}\in L}\mathcal{T}_{\boldsymbol{v}}\subseteq\mathcal{T}'\cap[-M,T+M]$,
so
\begin{align*}
\left|\mathcal{T}'\cap[-M,T+M]\right| & \geq\frac{\operatorname{vol}\left(P'\right)}{\operatorname{vol}\left(P\right)}T\operatorname{vol}\left(A'\right)\\
 & =\left(1-\varepsilon\right)^{N}T\operatorname{vol}\left(A'\right)
\end{align*}
for every $\varepsilon>0$. Hence $\left|\mathcal{T}'\cap[-M,T+M]\right|\geq T\operatorname{vol}\left(A'\right)$.
Define $\tau_{1}=\min\left(\mathcal{T}'\cap[0,+\infty)\right)$ and
$\tau_{\ell+1}=\min\left(\mathcal{T}'\cap[\tau_{\ell}+\delta,+\infty)\right)$,
$\ell\geq1$. Then
\[
\mathcal{T}'\cap[0,T]\subseteq\bigcup_{\tau_{\ell}\leq T}[\tau_{\ell},\tau_{\ell}+\delta],
\]
so
\begin{align*}
\#\left\{ \ell:\tau_{\ell}\leq T\right\}  & \geq\frac{1}{\delta}\left|\mathcal{T}'\cap[0,T]\right|\\
 & \geq\frac{T}{\delta}\left(\operatorname{vol}\left(A'\right)-\frac{2M}{T}\right).
\end{align*}
This implies the assertion. \qed 

\section{The measure of the solution set}

Next we describe the procedure of estimating the measure of the set
$A'$ defined in Theorem~\ref{thm:Tiling-method} and Proposition~\ref{prop:Tiling}
when $A$ is defined by~(\ref{eq:solution-set}) and $w_{n}:\left[0,\frac{1}{2}\right]\to[0,1]$,
$n=1,\dotsc,N$, are arbitrary non-decreasing functions.
\begin{lem}
\label{lem:volume}Let $N$ and $\ell$ be positive integers. For
$n=1,\dotsc,N$ let $\varepsilon_{n}>0$ and let $w_{n}:\left[0,\tfrac{1}{2}\right]\to[0,1]$
be non-decreasing functions. Let $A$ be as in~(\ref{eq:solution-set})
and let 
\[
A'=\left\{ \boldsymbol{v}\in A:\boldsymbol{v}+\prod_{n=1}^{N}\left[-\varepsilon_{n},\varepsilon_{n}\right]\subseteq A\right\} .
\]
For each $n=1,\dotsc,N$ let $0=x_{n,0}\leq x_{n,1}\leq\dotsc\leq x_{n,\ell}\leq\tfrac{1}{2}-\varepsilon_{n}$
be such that
\[
w_{n}\left(\varepsilon_{n}+x_{n,i}\right)\leq\frac{i}{\ell},\qquad i=1,\dotsc,\ell.
\]
Let
\[
r_{i}=\underset{k_{1}+\dotsc+k_{N}=i}{\sum_{k_{1}=1}^{\ell}\dotsc\sum_{k_{N}=1}^{\ell}}\prod_{n=1}^{N}\left(x_{n,k_{n}}-x_{n,k_{n}-1}\right),\qquad i=1,\dotsc,\ell.
\]
Then
\begin{equation}
\operatorname{vol}(A')\geq2^{N}\sum_{i=1}^{\ell}r_{i}.\label{eq:volume}
\end{equation}
\end{lem}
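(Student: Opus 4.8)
The plan is to construct, inside $A'$, an explicit family of pairwise interior-disjoint axis-parallel boxes whose volumes add up to exactly $2^{N}\sum_{i=1}^{\ell}r_{i}$, and then conclude by monotonicity of Lebesgue measure. The boxes will be products of consecutive intervals $[x_{n,k_{n}-1},x_{n,k_{n}}]$, reflected into all $2^{N}$ orthants.

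Concretely, for a multi-index $\boldsymbol{k}=(k_{1},\dots,k_{N})$ with $1\le k_{n}\le\ell$ for each $n$ and $k_{1}+\dots+k_{N}\le\ell$, set
\[
B_{\boldsymbol{k}}=\prod_{n=1}^{N}\bigl[x_{n,k_{n}-1},\,x_{n,k_{n}}\bigr].
\]
The first and main step is to verify $B_{\boldsymbol{k}}\subseteq A'$. Take $\boldsymbol{v}\in B_{\boldsymbol{k}}$ and any $\boldsymbol{u}\in\prod_{n=1}^{N}[-\varepsilon_{n},\varepsilon_{n}]$. Since $0\le v_{n}\le x_{n,k_{n}}\le x_{n,\ell}\le\tfrac12-\varepsilon_{n}$, we get $|v_{n}+u_{n}|\le x_{n,k_{n}}+\varepsilon_{n}\le\tfrac12$, so $\boldsymbol{v}+\boldsymbol{u}\in[-\tfrac12,\tfrac12]^{N}$; and because $w_{n}$ is non-decreasing and $w_{n}(\varepsilon_{n}+x_{n,k_{n}})\le k_{n}/\ell$,
\[
\sum_{n=1}^{N}w_{n}\bigl(|v_{n}+u_{n}|\bigr)\le\sum_{n=1}^{N}w_{n}\bigl(x_{n,k_{n}}+\varepsilon_{n}\bigr)\le\frac{k_{1}+\dots+k_{N}}{\ell}\le1.
\]
Hence $\boldsymbol{v}+\boldsymbol{u}\in A$; as $\boldsymbol{u}$ was arbitrary, $\boldsymbol{v}\in A'$.

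Next I would use symmetry. Both $A$ and the box $\prod_{n=1}^{N}[-\varepsilon_{n},\varepsilon_{n}]$ are invariant under every sign change $\boldsymbol{x}\mapsto(s_{n}x_{n})_{n=1}^{N}$ with $\boldsymbol{s}\in\{-1,1\}^{N}$, so $A'$ is too, and therefore each reflected box $\boldsymbol{s}\cdot B_{\boldsymbol{k}}$ also lies in $A'$. It remains to check that the interiors of the boxes $\boldsymbol{s}\cdot B_{\boldsymbol{k}}$ are pairwise disjoint. The interior of $B_{\boldsymbol{k}}$ is $\prod_{n}(x_{n,k_{n}-1},x_{n,k_{n}})\subseteq(0,\tfrac12)^{N}$ (each $x_{n,k_{n}-1}\ge x_{n,0}=0$ and $x_{n,k_{n}}<\tfrac12$), so boxes reflected by distinct $\boldsymbol{s}$ lie in distinct open orthants and are disjoint; and for a fixed $\boldsymbol{s}$ and $\boldsymbol{k}\ne\boldsymbol{k}'$, choosing a coordinate $n$ with, say, $k_{n}<k_{n}'$ gives $x_{n,k_{n}}\le x_{n,k_{n}'-1}$, so the $n$-th coordinate intervals overlap in at most a point and the reflected boxes again have disjoint interiors.

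Putting it together, $A'$ contains the union of all these pairwise interior-disjoint boxes, so
\[
\operatorname{vol}(A')\ge 2^{N}\sum_{\boldsymbol{k}}\ \prod_{n=1}^{N}\bigl(x_{n,k_{n}}-x_{n,k_{n}-1}\bigr),
\]
where $\boldsymbol{k}$ runs over all multi-indices with $1\le k_{n}\le\ell$ and $k_{1}+\dots+k_{N}\le\ell$. Grouping these multi-indices by the value $i=k_{1}+\dots+k_{N}$, and noting there are none with $i<N$ (so $r_{i}=0$ there), the right-hand side equals $2^{N}\sum_{i=N}^{\ell}r_{i}=2^{N}\sum_{i=1}^{\ell}r_{i}$, which is exactly~(\ref{eq:volume}). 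I expect the only delicate point to be the inclusion $B_{\boldsymbol{k}}\subseteq A'$: one must make sure each translate $\boldsymbol{v}+\boldsymbol{u}$ stays within the domain $[0,\tfrac12]$ of every $w_{n}$ and that the correct bound $k_{n}/\ell$ is read off from the hypothesis; the rest is bookkeeping. Measurability of $A'$ is not an obstacle, since the argument only produces a lower bound via a finite union of boxes (equivalently, it bounds the inner measure of $A'$).
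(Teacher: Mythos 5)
Your proof is correct and follows essentially the same route as the paper's: you exhibit the same hyperrectangles $\prod_{n}[x_{n,k_{n}-1},x_{n,k_{n}}]$ for multi-indices with $k_{1}+\dots+k_{N}\le\ell$, show each lies in $A'$ by the identical monotonicity estimate, and reflect them into all $2^{N}$ orthants by the symmetry of $A'$. The only difference is that you spell out the interior-disjointness and the regrouping by $i=k_{1}+\dots+k_{N}$, which the paper leaves implicit.
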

\begin{proof}
Let $\boldsymbol{k}=(k_{n})_{n=1}^{N}$ be such that $1\leq k_{n}\leq\ell$
for each $n$ and $\sum_{n=1}^{N}k_{n}=i\leq\ell$. The hyperrectangle
$P_{\boldsymbol{k}}=\prod_{n=1}^{N}\left[x_{n,k_{n}-1},x_{n,k_{n}}\right]$
is contained in $A'$, because for every $\boldsymbol{v}=\left(v_{n}\right)_{n=1}^{N}\in P_{\boldsymbol{k}}$
and $\boldsymbol{\delta}=\left(\delta_{n}\right)_{n=1}^{N}\in\prod_{n=1}^{N}\left[-\varepsilon_{n},\varepsilon_{n}\right]$
we have $\boldsymbol{v}+\boldsymbol{\delta}\in\left[-\tfrac{1}{2},\tfrac{1}{2}\right]^{N}$
and
\[
\sum_{n=1}^{N}w_{n}\left(\left|v_{n}+\delta_{n}\right|\right)\leq\sum_{n=1}^{N}w_{n}\left(v_{n}+\varepsilon_{n}\right)\leq\sum_{n=1}^{N}\frac{k_{n}}{\ell}=\frac{i}{\ell}\leq1.
\]
By the symmetry of $A'$ all the reflections of $P_{\boldsymbol{k}}$
relative to hyperplanes spanned by axes are also contained in $A'$.
This implies the assertion.
\end{proof}

\section{Proof of Theorem~\ref{thm:Main-Theorem}\label{sec:Proof}}

The proof contains several numbered claims whose validity we verify
by rigorous machine computation in Mathematica using CenteredInterval
objects that bundle the ball arithmetic from the ARB library~\cite{ARB}.
The source code for this verification, as well as a demonstration
of how we selected the final parameter values used in this proof,
are available for inspection in the form of Mathematica notebooks~\cite{GKPR}.
We quote here all the data chosen arbitrarily, such as the definition
of the contour $z_{1}(t)$, so that the reader can, in principle,
recover our computations from the paper text alone. We have used the
high-precision estimates of the zeros of the Riemann Zeta function
from the Flint library~\cite{FLINT}, which now incorporates ARB.
We have checked that they agree with those returned by Mathematica
and, in case of the initial zeros, with those published by Odlyzko~\cite{Odlyzko}.

It was proved in~\cite[Theorem 2]{Kaczorowski1985} that $\kappa_{0}\geq\gamma^{*}/\pi$,
where $\gamma^{*}>0$ denotes the ordinate of the lowest-lying non-trivial
zero of the Riemann zeta-function on the line $\sigma=\sup_{\zeta(\rho)=0}\Re(\rho)$
if there are any such zeros. Otherwise $\gamma^{*}=\infty$. Since
all the non-trivial zeros with $|\gamma|\leq3\cdot10^{12}$ lie on
the critical line (cf.~\cite{PlattTrudgian2021}), we deduce that
\[
\kappa_{0}\geq\frac{3}{\pi}\cdot10^{12}>\frac{\gamma_{0}}{\pi}+9.54\cdot10^{11}
\]
if the Riemann Hypothesis is not true. This is much stronger than
the assertion of Theorem~\ref{thm:Main-Theorem}. Hence, in the rest
of the proof, we can assume that the Riemann Hypothesis is true. 

Let $N=21$, $\boldsymbol{\eta}=(\eta_{1},\dotsc,\eta_{N})=\left(\pi-\arg\left(\rho_{0}/\rho_{n}\right)\right)_{n=1}^{N}$
and fix $F=F^{*}$, in particular $a(n)=\frac{\rho_{0}}{\rho_{n}}$,
$\omega_{n}=\gamma_{n}-\gamma_{0}$ (for the initial zeros, that are
confirmed to be simple) and
\[
F_{N}^{\boldsymbol{\eta}}(z)=1-\sum_{n=1}^{N}\left|\frac{\rho_{0}}{\rho_{n}}\right|e^{i\omega_{n}z}.
\]
Let $f(z)$ denote the derivative of $F_{N}^{\boldsymbol{\eta}}(z)$,
i.e.
\[
f(z)=-\sum_{n=1}^{N}i\omega_{n}\left|\frac{\rho_{0}}{\rho_{n}}\right|e^{i\omega_{n}z}.
\]
Define the contour $z_{1}(t)$ as
\[
z_{1}(t)=\begin{cases}
4t\delta+Y_{0}i, & 0\leq t\leq\tfrac{1}{8},\\
\tfrac{1}{2}\delta+\left(Y_{0}+4(t-\tfrac{1}{8})(Y_{1}-Y_{0})\right)i, & \tfrac{1}{8}\leq t\leq\tfrac{3}{8},\\
-4(t-\tfrac{1}{2})\delta+Y_{1}i, & \tfrac{3}{8}\leq t\leq\tfrac{5}{8},\\
-\tfrac{1}{2}\delta+\left(Y_{1}+4(t-\tfrac{5}{8})(Y_{0}-Y_{1})\right)i & \tfrac{5}{8}\leq t\leq\tfrac{7}{8},\\
4(t-1)\delta+Y_{0}i, & \tfrac{7}{8}\leq t\leq1,
\end{cases}
\]
where $\delta=\mathtt{0.132737}$, $Y_{0}=\mathtt{0.0468918}$, $Y_{1}=\mathtt{0.14}$.
Let $m=\mathtt{12000}$ and $t_{j}=\frac{j}{m}$, $j=0,\dotsc,m$.
Note that $z_{1}([t_{j-1},t_{j}])$ is a straight line segment and
the derivative of $z_{1}(t)$ is constant inside this interval for
$j=1,\dotsc,m$. Let $\alpha(t)$ be a continous piecewise-linear
function defined on $[0,\frac{1}{2}]$ by $\alpha(0)=0$, $\alpha(\frac{1}{32})=\mathtt{0.489819}$,
$\alpha(\frac{1}{8})={\tt 1.85802}$, $\alpha(\frac{3}{16})={\tt 2.04829}$,
$\alpha(\frac{3}{8})={\tt 2.90189}$, $\alpha(\frac{1}{2})=\pi$ and
changing linearly in-between those points. We define $\left(\alpha_{j}\right)_{j=0}^{m+1}$
by
\[
\alpha_{j}=\begin{cases}
-\pi+\alpha\left(\frac{j-1/2}{m}\right) & 1\leq j\leq\frac{m}{2},\\
\pi-\alpha\left(\frac{m+1/2-j}{m}\right), & \frac{m}{2}+1\leq j\leq m,
\end{cases}
\]
moreover $\alpha_{m+1}=2\pi+\alpha_{1}$ and $\alpha_{0}=\alpha_{m}-2\pi$.

\begin{claim}
\label{claim:arguments}We have $\left|\alpha_{j}-\alpha_{j-1}\right|<\pi$
and $\left|z_{1}(t_{j})-z_{1}(t_{j-1})\right|<\frac{\pi}{\omega_{N}}$
for $j=1,\dotsc,m$.
\end{claim}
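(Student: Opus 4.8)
The plan is to reduce each of the two inequalities to a handful of completely explicit numerical estimates and then to observe that each of them holds with a margin of several orders of magnitude, so that all of them are settled by the interval-arithmetic computation announced at the beginning of this section.

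For $\left|z_{1}(t_{j})-z_{1}(t_{j-1})\right|<\pi/\omega_{N}$ the first thing I would record is that the break-points $0,\tfrac{1}{8},\tfrac{3}{8},\tfrac{5}{8},\tfrac{7}{8},1$ of the piecewise-linear path $z_{1}$ are all integer multiples of $\tfrac{1}{m}$, since $m=\mathtt{12000}$. Hence every subinterval $[t_{j-1},t_{j}]$ lies inside one of the five linear pieces, on which $z_{1}$ has constant derivative of modulus $4\delta$ (the horizontal pieces) or $4\left|Y_{1}-Y_{0}\right|$ (the vertical pieces), so that
\[
\left|z_{1}(t_{j})-z_{1}(t_{j-1})\right|\leq\frac{4}{m}\max\bigl(\delta,\left|Y_{1}-Y_{0}\right|\bigr)=\frac{4\delta}{m}.
\]
Since $\omega_{N}=\gamma_{21}-\gamma_{0}$, it then only remains to check numerically that $\tfrac{4\delta}{m}<5\cdot10^{-5}$ while $\tfrac{\pi}{\gamma_{21}-\gamma_{0}}>4\cdot10^{-2}$.

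For $\left|\alpha_{j}-\alpha_{j-1}\right|<\pi$ the key observation is that $\alpha$ is a continuous, non-decreasing, piecewise-linear function on $[0,\tfrac{1}{2}]$ whose steepest piece is the first one, of slope $s_{\max}=32\cdot\mathtt{0.489819}$. I would split the range $1\leq j\leq m$ into four parts. For $2\leq j\leq\tfrac{m}{2}$ and for $\tfrac{m}{2}+2\leq j\leq m$ both indices $j$ and $j-1$ fall into the same branch of the definition of $(\alpha_{j})$, so that $\left|\alpha_{j}-\alpha_{j-1}\right|=\left|\alpha(x)-\alpha(x')\right|$ for some $x,x'\in[\tfrac{1}{2m},\tfrac{1}{2}-\tfrac{1}{2m}]$ with $\left|x-x'\right|=\tfrac{1}{m}$; by monotonicity and the slope bound this is at most $s_{\max}/m<\pi$. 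The two remaining indices are treated by hand: at $j=1$ the wrap-around definition gives $\alpha_{1}-\alpha_{0}=\alpha_{1}-\alpha_{m}+2\pi=2\alpha(\tfrac{1}{2m})=s_{\max}/m$, and at $j=\tfrac{m}{2}+1$ the transition between the two branches gives $\alpha_{m/2+1}-\alpha_{m/2}=2\pi-2\alpha\bigl(\tfrac{1}{2}-\tfrac{1}{2m}\bigr)$, which, because $\alpha(\tfrac{1}{2})=\pi$ and the last piece has slope $8(\pi-\mathtt{2.90189})$, equals $8(\pi-\mathtt{2.90189})/m$; both quantities are positive and far below $\pi$.

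The hard part is frankly only the bookkeeping: one must verify that the mesh $\tfrac{1}{m}$ is compatible with the break-points of $z_{1}$ and with the branch-point $j=\tfrac{m}{2}$ of the sequence $(\alpha_{j})$, so that, apart from the two exceptional indices, every difference appearing in the claim is the increment of a single linear piece and is therefore controlled by a single slope. Once this is in place, the claim reduces to the finitely many explicit inequalities displayed above, which are precisely what the machine verification confirms.
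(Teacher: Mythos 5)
Your argument is correct, and it is genuinely different in character from what the paper does: the paper simply declares Claim~1 to be one of the statements ``whose validity we verify by rigorous machine computation'' with interval arithmetic, i.e.\ it checks all $m=12000$ instances numerically, whereas you reduce the whole claim to a handful of explicit one-line inequalities. Your reductions check out: the break-points $0,\tfrac18,\tfrac38,\tfrac58,\tfrac78,1$ of $z_1$ are multiples of $\tfrac1m$, the speeds are $4\delta$ or $4\lvert Y_1-Y_0\rvert$ with $\delta>\lvert Y_1-Y_0\rvert$, and $4\delta/m\approx4.4\cdot10^{-5}$ is far below $\pi/\omega_{21}\approx4.6\cdot10^{-2}$; for the $\alpha_j$ the largest slope of $\alpha$ is indeed the first one ($32\cdot0.489819\approx15.67$, versus roughly $14.6$, $3.0$, $4.6$, $1.9$ for the other pieces), and your two boundary cases $j=1$ and $j=\tfrac m2+1$ are computed correctly ($2\alpha(\tfrac1{2m})=s_{\max}/m$ and $2\pi-2\alpha(\tfrac12-\tfrac1{2m})=8(\pi-2.90189)/m$, both of order $10^{-3}$ or less). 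Two small remarks. First, for the $\alpha$-part the ``bookkeeping'' about subintervals lying in a single linear piece is not actually needed: the interval $[\tfrac{j-3/2}{m},\tfrac{j-1/2}{m}]$ can straddle a break-point of $\alpha$ (e.g.\ near $\tfrac1{32}=\tfrac{375}{12000}$), but since a continuous piecewise-linear function is globally Lipschitz with constant equal to its largest absolute slope, the bound $s_{\max}/m$ holds regardless; the same remark disposes of the alignment issue for $z_1$ as well. Second, your approach buys a human-verifiable proof at the cost of a case analysis; the paper's buys uniformity of treatment with Claims~2--5, which (unlike this one) genuinely require large-scale computation. Either way the claim holds with enormous margin.
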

Define 
\[
z_{2}(t)=\exp\left(i\left(\frac{\alpha_{j-1}+\alpha_{j}}{2}+\frac{t-t_{j-1}}{t_{j}-t_{j-1}}\cdot\frac{\alpha_{j+1}-\alpha_{j-1}}{2}\right)\right),\qquad t_{j-1}\leq t\leq t_{j},j=1,\dotsc,m,
\]
and
\[
\eta(t)=\alpha_{j}-\left(\frac{\alpha_{j-1}+\alpha_{j}}{2}+\frac{t-t_{j-1}}{t_{j}-t_{j-1}}\cdot\frac{\alpha_{j+1}-\alpha_{j-1}}{2}\right),\qquad t_{j-1}\leq t\leq t_{j},j=1,\dotsc,m,
\]
so that
\[
F_{N}^{\boldsymbol{\eta}}(z_{1}(t))\overline{z_{2}(t)}e^{-i\eta(t)}=F_{N}^{\boldsymbol{\eta}}(z_{1}(t))e^{-i\alpha_{j}}
\]
and
\[
\varphi_{n}(t)=\eta_{n}+\arg a(n)+\omega_{n}\Re z_{1}(t)-\arg z_{2}(t)-\eta(t)=\pi+\omega_{n}\Re z_{1}(t)-\alpha_{j}
\]
for $t_{j-1}\leq t\leq t_{j}$, $j=1,\dotsc,m$ and $n=1,\dotsc,N$.
It follows from Claim~\ref{claim:arguments} that $z_{1}(t)$ and
$z_{2}(t)$ are closed contours and that $\left|\eta(t)\right|\leq\frac{\pi}{2}$
for $0\leq t\leq1$. For $j=1,\dotsc,m$ let 
\[
b_{j}=\tfrac{1}{2}\left|z_{1}(t_{j-1})-z_{1}(t_{j})\right|,
\]
\[
x_{j}'=\min(\Re z_{1}(t_{j-1}),\Re z_{1}(t_{j})),
\]
\[
x_{j}''=\max(\Re z_{1}(t_{j-1}),\Re z_{1}(t_{j})),
\]
\[
y_{j}=\min(\Im z_{1}(t_{j-1}),\Im z_{1}(t_{j})),
\]
\[
M_{j}=\sum_{n=1}^{N}\left|\frac{\rho_{0}}{\rho_{n}}\right|\omega_{n}^{2}e^{-\omega_{n}y_{j}}.
\]
We have $\left|f'(z)\right|\leq M_{j}$ for $t_{j-1}\leq t\leq t_{j}$,
$j=1,\dotsc,m$. Let $N'=9998$ , $T_{1}=\frac{\gamma_{N'}+\gamma_{N'+1}}{2}$
and define
\[
\widetilde{R}_{N}(y)=\sum_{n=N+1}^{N'}\left|\frac{\rho_{0}}{\rho_{n}}\right|e^{-\omega_{n}y}+\frac{\left|\rho_{0}\right|e^{\gamma_{0}y}}{T_{1}e^{T_{1}y}}\left(\frac{1}{2\pi}\frac{\log\left(T_{1}\right)}{y}+4\log T_{1}+\frac{2}{T_{1}y}\right),\qquad y>0.
\]
It follows from Lemma~\ref{lem:RN-bound} that $\widetilde{R}_{N}(y)>R_{N}^{*}(y)$
for $y>0$. Moreover the function $R_{N}(y)$ is monotone decreasing
in $y$. For $t_{j-1}\leq t\leq\frac{t_{j-1}+t_{j}}{2}$ we have
\[
\Re\left(F_{N}^{\boldsymbol{\eta}}(z_{1}(t))e^{-i\alpha_{j}}\right)\geq\Re\left(F_{N}^{\boldsymbol{\eta}}(z_{1}(t_{j-1}))e^{-i\alpha_{j}}\right)+\min\left(0,\Re\left(b_{j}f(z_{1}(t_{j-1}))e^{-i\alpha_{j}}\right)\right)-\frac{b_{j}^{2}M_{j}}{2}.
\]
Likewise
\[
\Re\left(F_{N}^{\boldsymbol{\eta}}(z_{1}(t))e^{-i\alpha_{j}}\right)\geq\Re\left(F_{N}^{\boldsymbol{\eta}}(z_{1}(t_{j}))e^{-i\alpha_{j}}\right)+\min\left(0,-\Re\left(b_{j}f(z_{1}(t_{j}))e^{-i\alpha_{j}}\right)\right)\Bigr)-\frac{b_{j}^{2}M_{j}}{2}
\]
for $\frac{t_{j-1}+t_{j}}{2}\leq t\leq t_{j}$, $j=1,\dotsc,m$. Therefore
the numbers
\begin{align*}
u_{j}= & \min\Bigl(\Re\left(F_{N}^{\boldsymbol{\eta}}(z_{1}(t_{j-1}))e^{-i\alpha_{j}}\right)+\min\left(0,\Re\left(b_{j}f(z_{1}(t_{j-1}))e^{-i\alpha_{j}}\right)\right),\\
 & \Re\left(F_{N}^{\boldsymbol{\eta}}(z_{1}(t_{j}))e^{-i\alpha_{j}}\right)+\min\left(0,-\Re\left(b_{j}f(z_{1}(t_{j}))e^{-i\alpha_{j}}\right)\right)\Bigr)\\
 & -\frac{b_{j}^{2}M_{j}}{2}-\widetilde{R}_{N}(y_{j}),\qquad j=1,\dotsc,m,
\end{align*}
satisfy
\[
u_{j}<\Re\left(F_{N}^{\boldsymbol{\eta}}(z_{1}(t))e^{-i\alpha_{j}}\right)-R_{N}(\Im z_{1}(t)),\qquad t_{j-1}\leq t\leq t_{j},
\]
for all $j$.

\begin{claim}
We have $u_{j}>0$ for $j=1,\dotsc,m$.
\end{claim}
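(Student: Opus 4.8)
The plan is to verify the $m=12000$ inequalities $u_j>0$ directly by rigorous ball arithmetic, since each $u_j$ is an explicit finite expression in quantities we can certify. First I would load certified enclosures of the ordinates $\gamma_1,\dots,\gamma_{N'}$ (with $N'=9998$) from the data in~\cite{FLINT}, cross-checked against~\cite{Odlyzko} for the low-lying zeros; this yields certified enclosures of $a(n)=\rho_0/\rho_n$ and $\omega_n=\gamma_n-\gamma_0$ for $n\le N$ and of $\left|\rho_0/\rho_n\right|$ for $n\le N'$. From the explicit formula for $z_1(t)$ and the nodes $t_j=j/m$ one then obtains enclosures of $z_1(t_{j-1})$, $z_1(t_j)$, $b_j$, $x_j'$, $x_j''$, $y_j$, and hence of $M_j=\sum_{n=1}^{N}\left|\rho_0/\rho_n\right|\omega_n^2 e^{-\omega_n y_j}$; likewise the piecewise-linear data defining $\alpha_j$ gives an enclosure of $\alpha_j$, and thus of $e^{-i\alpha_j}$.

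Second, I would evaluate $F_N^{\boldsymbol{\eta}}(z_1(t_{j-1}))$ and $f(z_1(t_{j-1}))$ (and the same at $t_j$) from their defining finite sums, multiply by $e^{-i\alpha_j}$, take real parts, and assemble the bracketed minimum in the definition of $u_j$, finally subtracting $b_j^2 M_j/2$ and $\widetilde R_N(y_j)$. The tail term $\widetilde R_N(y_j)$ is evaluated as the certified partial sum $\sum_{n=N+1}^{N'}\left|\rho_0/\rho_n\right|e^{-\omega_n y_j}$ plus the closed-form bound from the first inequality of Lemma~\ref{lem:RN-bound} with $T_1=(\gamma_{N'}+\gamma_{N'+1})/2$; by construction $\widetilde R_N(y)>R_N^{*}(y)=R_N(y)$, and monotonicity in $y$ together with $y_j=\min_{t\in[t_{j-1},t_j]}\Im z_1(t)$ justifies using $y_j$. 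Since the two displayed inequalities established just before the claim show that $u_j\le \Re\!\left(F_N^{\boldsymbol{\eta}}(z_1(t))e^{-i\alpha_j}\right)-R_N(\Im z_1(t))$ for $t_{j-1}\le t\le t_j$, once every enclosure of $u_j$ is certified to lie in $(0,\infty)$ the claim follows.

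The main obstacle is not conceptual but numerical: interval arithmetic must not lose too much precision in the near-cancellations, above all in the quantity $\Re\!\left(F_N^{\boldsymbol{\eta}}(z_1(t_j))e^{-i\alpha_j}\right)$ — which is small where the contour passes near the zero of $F_N^{\boldsymbol{\eta}}$ it encircles — minus the comparably small tail bound $\widetilde R_N(y_j)$ and the quadratic error $b_j^2 M_j/2$. This forces a sufficiently fine partition (here $m=12000$, so that $b_j^2 M_j$ is negligible, compatibly with the constraint $\left|z_1(t_j)-z_1(t_{j-1})\right|<\pi/\omega_N$ of Claim~\ref{claim:arguments}) and enough working precision in the zero data; the contour $z_1$ and the parameters $\delta,Y_0,Y_1$ and the function $\alpha$ were chosen precisely so that the margin $\min_j u_j$ stays comfortably positive. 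Concretely, the verification is carried out in Mathematica with CenteredInterval objects wrapping ARB ball arithmetic~\cite{ARB}, as in the notebooks~\cite{GKPR}, producing a certified positive lower bound for $\min_{1\le j\le m}u_j$, which establishes the claim.
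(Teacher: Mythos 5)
Your proposal is correct and coincides with the paper's treatment: the authors establish this claim (like the other numbered claims) purely by rigorous machine computation, evaluating each of the $m=12000$ explicit expressions $u_j$ with certified enclosures of the zeta zeros in Mathematica/ARB ball arithmetic, with the tail handled exactly as you describe via $\widetilde R_N$ and Lemma~\ref{lem:RN-bound}. Nothing further is needed.
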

For $n=1,\dotsc,N$ and $j=1,\dotsc,m$ let
\[
\beta'_{n,j}=\pi+\omega_{n}x_{j}'-\alpha_{j},
\]
\[
\beta''_{n,j}=\pi+\omega_{n}x_{j}''-\alpha_{j},
\]
moreover, if
\begin{equation}
\beta'_{n,j}\leq2\ell\pi\leq\beta''_{n,j}\hspace{1em}\text{for some }\ell\in\mathbb{Z},\label{eq:beta-condition-2pi}
\end{equation}
then
\[
\varphi'_{n,j}=0,\qquad\varphi''_{n,j}=2\pi\max\left(\left\Vert \frac{\beta'_{n,j}}{2\pi}\right\Vert ,\left\Vert \frac{\beta''_{n,j}}{2\pi}\right\Vert \right);
\]
if
\begin{equation}
\beta'_{n,j}\leq(2\ell+1)\pi\leq\beta''_{n,j}\hspace{1em}\text{for some }\ell\in\mathbb{Z},\label{eq:beta-condition-pi}
\end{equation}
then
\[
\varphi'_{n,j}=2\pi\min\left(\left\Vert \frac{\beta'_{n,j}}{2\pi}\right\Vert ,\left\Vert \frac{\beta''_{n,j}}{2\pi}\right\Vert \right),\qquad\varphi''_{n,j}=\pi;
\]
otherwise, if~(\ref{eq:beta-condition-2pi}) and~(\ref{eq:beta-condition-pi})
are false, then
\[
\varphi'_{n,j}=2\pi\min\left(\left\Vert \frac{\beta'_{n,j}}{2\pi}\right\Vert ,\left\Vert \frac{\beta''_{n,j}}{2\pi}\right\Vert \right),\qquad\varphi''_{n,j}=2\pi\max\left(\left\Vert \frac{\beta'_{n,j}}{2\pi}\right\Vert ,\left\Vert \frac{\beta''_{n,j}}{2\pi}\right\Vert \right).
\]
Note that conditions (\ref{eq:beta-condition-2pi}) and~(\ref{eq:beta-condition-pi})
are mutually exclusive by the following claim.

\begin{claim}
We have $\beta''_{n,j}-\beta'_{n,j}<\pi$ for $n=1,\dotsc,N$ and
$j=1,\dotsc,m$.
\end{claim}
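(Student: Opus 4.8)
The plan is to reduce the claim immediately to Claim~\ref{claim:arguments}. From the definitions
\[
\beta'_{n,j}=\pi+\omega_n x_j'-\alpha_j,\qquad \beta''_{n,j}=\pi+\omega_n x_j''-\alpha_j,
\]
the terms $\pi$ and $\alpha_j$ do not depend on $x_j',x_j''$, so they cancel in the difference and
\[
\beta''_{n,j}-\beta'_{n,j}=\omega_n\left(x_j''-x_j'\right),\qquad n=1,\dots,N,\ j=1,\dots,m.
\]
Thus it suffices to bound $\omega_n\left(x_j''-x_j'\right)$, where $x_j''-x_j'$ is the length of the projection onto the real axis of the $j$-th segment of the contour.

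Next I would use that $z_1$ restricted to $[t_{j-1},t_j]$ is a straight segment, so that
\[
x_j''-x_j'=\bigl|\Re z_1(t_{j-1})-\Re z_1(t_j)\bigr|\le\bigl|z_1(t_{j-1})-z_1(t_j)\bigr|
\]
because $|\Re w|\le|w|$. By Claim~\ref{claim:arguments} we have $\bigl|z_1(t_j)-z_1(t_{j-1})\bigr|<\pi/\omega_N$, hence $x_j''-x_j'<\pi/\omega_N$ for each $j$. Since the ordinates are listed in non-decreasing order, $\omega_n=\gamma_n-\gamma_0\le\gamma_N-\gamma_0=\omega_N$ for $1\le n\le N$, so
\[
\beta''_{n,j}-\beta'_{n,j}=\omega_n\left(x_j''-x_j'\right)\le\omega_N\left(x_j''-x_j'\right)<\omega_N\cdot\frac{\pi}{\omega_N}=\pi,
\]
which is the assertion.

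There is essentially no obstacle here: the only things to keep in mind are that the cancellation in the first display is exact (both $\pi$ and $\alpha_j$ are independent of $x_j',x_j''$) and that $\omega_n\le\omega_N$ for the relevant range of $n$, both of which are immediate. Alternatively, this inequality can be verified numerically alongside Claim~\ref{claim:arguments} by the same rigorous interval arithmetic used for the other claims of this section, but the argument above makes that unnecessary.
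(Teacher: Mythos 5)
Your argument is correct, and it is a genuinely different route from the paper's. In the paper this statement is one of the numbered claims whose validity is established by rigorous machine computation (interval arithmetic via CenteredInterval/ARB), on the same footing as the other claims of Section~\ref{sec:Proof}; no analytic derivation is given. You instead observe that the constants $\pi$ and $\alpha_j$ cancel, so that $\beta''_{n,j}-\beta'_{n,j}=\omega_n\left(x_j''-x_j'\right)$, bound $x_j''-x_j'=\left|\Re z_1(t_j)-\Re z_1(t_{j-1})\right|\leq\left|z_1(t_j)-z_1(t_{j-1})\right|<\pi/\omega_N$ by Claim~\ref{claim:arguments}, and use the monotonicity $0\leq\omega_n\leq\omega_N$ coming from $\gamma_0\leq\gamma_1\leq\dotsc$ Each step is sound, so the claim is in fact a corollary of Claim~\ref{claim:arguments} and needs no separate numerical verification. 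What your approach buys is transparency and a reduction in the amount of machine-checked material; what the paper's approach buys is uniformity, since all claims in that section are certified by the same computational pipeline anyway, and the cost of one more interval-arithmetic check is negligible.
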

We have
\[
\varphi'_{n,j}\leq2\pi\left\Vert \frac{\varphi_{n}(t)}{2\pi}\right\Vert \leq\varphi''_{n,j},\qquad t_{j-1}\leq t\leq t_{j},\;j=1,\dotsc,m,\;n=1,\dotsc,N.
\]
It follows from Lemma~\ref{lem:v-extremes} that
\[
\sup_{0\leq t\leq1}\frac{\left|a(n)\right|}{u_{j}e^{\omega_{n}\Im z_{1}(t)}}v\left(2\pi\left\Vert \frac{\varphi_{n}(t)}{2\pi}\right\Vert ,2\pi x\right)\leq\max_{j=0,\dotsc,m}c_{n,j}\max\left(v\left(\varphi'_{n,j},2\pi x\right),v\left(\varphi''_{n,j},2\pi x\right)\right),
\]
where
\[
c_{n,j}=\begin{cases}
\left|a(n)\right|u_{j}^{-1}e^{-\omega_{n}y_{j}}\left(\cos\left(\frac{\varphi''_{n,j}-\varphi'_{n,j}}{2}\right)\right)^{-1}, & \varphi''_{n,j}\leq\frac{\pi}{2},\\
\left|a(n)\right|u_{j}^{-1}e^{-\omega_{n}y_{j}} & \varphi'_{n,j}\geq\frac{\pi}{2},\\
\left|a(n)\right|u_{j}^{-1}e^{-\omega_{n}y_{j}}\left(\cos\left(\frac{\frac{\pi}{2}-\varphi'_{n,j}}{2}\right)\right)^{-1}, & \varphi'_{n,j}<\frac{\pi}{2}<\varphi''_{n,j},
\end{cases}
\]
for $j=1,\dotsc,m$, moreover $\varphi'_{n,0}=\varphi''_{n,0}=\frac{\pi}{2}$
and
\[
c_{n,0}=\max_{\substack{\varphi'_{n,j}\leq\frac{\pi}{2}\\
\varphi''_{n,j}\geq\frac{\pi}{2}
}
}\left|a(n)\right|u_{j}^{-1}e^{-\omega_{n}y_{j}}\left(\cos\left(\frac{\frac{\pi}{2}-\varphi'_{n,j}}{2}\right)\right)^{-1}
\]
if the last $\max$ is over a non-empty set, otherwise $c_{n,0}=0$.
We define
\[
w_{n}(x)=\max_{j=0,\dotsc,m}c_{n,j}\max\left(v\left(\varphi'_{n,j},2\pi x\right),v\left(\varphi''_{n,j},2\pi x\right)\right),\qquad0\leq x\leq\frac{1}{2},n=1,\dotsc,N.
\]
Let
\[
A=\left\{ (v_{1},\dotsc,v_{N}):\sum_{n=1}^{N}w_{n}\left(\left|v_{n}\right|\right)\leq1\right\} .
\]
For every $\tau\in\mathbb{R}$ such that 
\[
\left(\frac{\omega_{n}\tau}{2\pi}\right)_{n=1}^{N}\in\frac{\boldsymbol{\eta}}{2\pi}+A+\mathbb{Z}^{N}
\]
we have
\[
\sum_{n=1}^{N}w_{n}\left(\left\Vert \frac{\omega_{n}\tau-\eta_{n}}{2\pi}\right\Vert \right)\leq1,
\]
so it follows from Proposition~\ref{prop:sufficient-condition} that
there is a zero of $F(z)$ inside the contour $z_{1}(t)+\tau$. Let
\[
\boldsymbol{\theta}=(\theta_{n})_{n=1}^{N}=\left(\frac{\omega_{n}}{2\pi}\right)_{n=1}^{N}
\]
and
\[
\boldsymbol{u}_{j}=\left(u_{j,n}\right)_{n=1}^{N}=\boldsymbol{e}_{j}-\mu_{j}\boldsymbol{\theta},\qquad\mu_{j}=\frac{\theta_{j}}{\boldsymbol{\theta}\cdot\boldsymbol{\theta}},\qquad j=1,\dotsc,N.
\]
We apply Algorithm~\ref{alg:LLL-bounded} to $\left(\boldsymbol{u}_{j}\right)_{j=1}^{N}$
with coefficients bound $10^{300}$ and obtain the following.

\begin{claim}
\label{claim:Algorithm1}There exists a matrix $C=\left(c_{k,j}\right)_{\substack{1\leq k\leq N\\
1\leq j\leq N
}
}$ with integer coefficients such that
\begin{equation}
\sum_{k=1}^{N}\left|\sum_{j=1}^{N}c_{k,j}u_{j,n}\right|<1.66\cdot10^{-13},\qquad n=1,\dotsc,N.\label{eq:dn-ineq}
\end{equation}
\end{claim}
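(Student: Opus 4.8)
The plan is to establish this claim by exhibiting an explicit integer matrix $C$ and then verifying~(\ref{eq:dn-ineq}) by rigorous machine computation; no property of Algorithm~\ref{alg:LLL-bounded} itself needs to be proved, since the claim only asserts the existence of a suitable $C$. Concretely, we run Algorithm~\ref{alg:LLL-bounded} on $1000$-digit floating-point approximations of $\boldsymbol{u}_1,\dotsc,\boldsymbol{u}_N$ with coefficients bound $M=10^{300}$, record the returned matrix $C=(c_{k,j})_{1\le k,j\le N}$ (reproduced in the notebooks~\cite{GKPR}), and certify~(\ref{eq:dn-ineq}) directly; along the way we also record that $\det C\neq0$, which is what Proposition~\ref{prop:Tiling} additionally needs. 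Note that~(\ref{eq:dn-ineq}) is precisely the hypothesis~(\ref{eq:vertices-inside-open-D}) of Proposition~\ref{prop:Tiling} with the constant choice $d_n=1.66\cdot10^{-13}$, because $|\sum_{k}\epsilon_k v_{k,n}|\le\sum_k|v_{k,n}|$ for every sign pattern $(\epsilon_k)$, where $v_{k,n}=\sum_{j=1}^N c_{k,j}u_{j,n}$.

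For the verification I would first obtain rigorous enclosures of the ordinates $\gamma_0,\dotsc,\gamma_N$ to high precision from the Flint/ARB data~\cite{FLINT,ARB}, and from these build CenteredInterval enclosures of $\boldsymbol{\theta}=(\omega_n/(2\pi))_{n=1}^N$, of the positive real number $\boldsymbol{\theta}\cdot\boldsymbol{\theta}$, of $\mu_j=\theta_j/(\boldsymbol{\theta}\cdot\boldsymbol{\theta})$, and finally of the coordinates $u_{j,n}$ of $\boldsymbol{u}_j=\boldsymbol{e}_j-\mu_j\boldsymbol{\theta}$. Since the $c_{k,j}$ are exact integers, each $v_{k,n}$ is then evaluated in ball arithmetic with no error beyond that propagated from the $u_{j,n}$, and for each $n=1,\dotsc,N$ one forms $\sum_{k=1}^N|v_{k,n}|$ as an interval and checks that its upper endpoint is strictly below $1.66\cdot10^{-13}$.

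The one genuinely delicate aspect is precision management: the $c_{k,j}$ may be as large as $10^{300}$, the individual products $c_{k,j}u_{j,n}$ are correspondingly huge, and they must cancel down to something of size at most $10^{-13}$; to witness this cancellation rigorously one must carry the $\gamma_n$, hence the $u_{j,n}$, with absolute accuracy comfortably beyond $10^{-313}$, which is the reason for working with $1000$ digits. At that precision the propagated uncertainty in each $v_{k,n}$ is of order at most $N\cdot10^{300}\cdot10^{-1000}$, utterly dwarfed by the gap between the computed value of $\sum_k|v_{k,n}|$ and the threshold, so the interval comparison succeeds and the strict inequality~(\ref{eq:dn-ineq}) is certified rather than merely observed numerically. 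The main obstacle is thus bookkeeping — ensuring the zeta-zero enclosures are tight enough and that ball arithmetic is threaded through every step — rather than any mathematical difficulty.
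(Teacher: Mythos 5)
Your proposal matches the paper's approach exactly: the claim is one of the machine-verified statements, obtained by running Algorithm~\ref{alg:LLL-bounded} heuristically on high-precision approximations of the $\boldsymbol{u}_j$ with coefficients bound $10^{300}$ and then certifying~(\ref{eq:dn-ineq}) rigorously in ball arithmetic, with no correctness proof for the algorithm itself required. Your observations about non-singularity of $C$, the reduction of~(\ref{eq:dn-ineq}) to hypothesis~(\ref{eq:vertices-inside-open-D}) via the triangle inequality, and the precision bookkeeping are all consistent with what the paper does.
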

It follows from Proposition~\ref{prop:Tiling} that there exists
an increasing sequence of non-negative numbers $(\tau_{i})_{i=1}^{\infty}$
such that for all $i$ we have $\left(\theta_{n}\tau_{i}\right)_{n=1}^{N}\in\boldsymbol{\eta}+A+\mathbb{Z}^{N}$
and $\tau_{i+1}\geq\tau_{i}+\delta$, and 
\[
\liminf_{i\to\infty}\frac{\ell}{\tau_{i}}\geq\frac{1}{\delta}\operatorname{vol}(A'),
\]
where $A'$ is as defined in Proposition~\ref{prop:Tiling}. Here
$\delta$ is, as above, the width of the contour $z_{1}(t)$. We obtain
a sequence of zeros of $F(z)$ inside the shifted contours $z_{1}(t)+\tau_{\ell}$.
By $\tau_{\ell+1}\geq\tau_{\ell}+\delta$ these are distinct zeros.
Hence $\varkappa\geq\frac{1}{\delta}\operatorname{vol}(A')$. Now
we need to show a lower bound for $\operatorname{vol}(A')$. We take
$\varepsilon=2\cdot1.66\cdot10^{-13}$, i.e. twice the right-hand
side of~(\ref{eq:dn-ineq}), choose $\ell=\mathtt{16000}$, and find
\begin{equation}
0\leq x_{n,1}\leq\dotsc\leq x_{n,\ell}\leq\tfrac{1}{2}-\varepsilon\label{eq:x-inequalities}
\end{equation}
satisfying 
\begin{equation}
w_{n}\left(\varepsilon+x_{n,k}\right)\leq\frac{k}{\ell},\qquad k=1,\dotsc,\ell,n=1,\dotsc,N.\label{eq:wn-inequalities}
\end{equation}
Then we use~(\ref{eq:volume}) to estimate the volume.

\begin{claim}
\label{claim:measure}There exist $x_{n,k}$ satisfying~(\ref{eq:x-inequalities}),~(\ref{eq:wn-inequalities})
and
\[
\frac{2}{\delta}\cdot2^{N}\underset{k_{1}+\dotsc+k_{N}\leq\ell}{\sum_{k_{1}=1}^{\ell}\dotsc\sum_{k_{N}=1}^{\ell}}\prod_{n=1}^{N}\left(x_{n,k_{n}}-x_{n,k_{n}-1}\right)\geq\frac{1}{60},
\]
where $x_{n,0}=0$ for $n=1,\dotsc,N$.
\end{claim}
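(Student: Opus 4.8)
The plan is to establish Claim~\ref{claim:measure} by a rigorous numerical computation, after reducing the $N$-dimensional sum to a one-variable polynomial product handled in interval arithmetic. First I would record that $w_n$ is continuous (a finite maximum of continuous functions), vanishes at $0$ (since $v(\varphi,0)=0$), and is non-decreasing on $[0,\tfrac{1}{2}]$: the last point because for fixed $\varphi\in[0,\pi]$ the map $\psi\mapsto v(\varphi,\psi)$ is non-decreasing on $[0,\pi]$, as $\tfrac{\partial}{\partial\psi}v(\varphi,\psi)=\sin(\min(\varphi+\psi,\pi))\ge0$, so each term $c_{n,j}\max\bigl(v(\varphi'_{n,j},2\pi x),v(\varphi''_{n,j},2\pi x)\bigr)$ is non-decreasing in $x$. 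Since, by an Abel-summation argument, the left-hand side of the inequality in Claim~\ref{claim:measure} is non-decreasing in each $x_{n,k}$, the best choice is to take the breakpoints as large as allowed,
\[
x_{n,k}=\sup\Bigl\{x\in[0,\tfrac{1}{2}-\varepsilon]:w_n(\varepsilon+x)\le\tfrac{k}{\ell}\Bigr\},\qquad k=1,\dots,\ell,
\]
the defining set being non-empty because $w_n(0)=0$ and $\varepsilon=3.32\cdot10^{-13}$ is tiny (so certainly $w_n(\varepsilon)\le\tfrac{1}{\ell}$), and the supremum being attained by continuity; these $x_{n,k}$ are automatically non-decreasing in $k$ and bounded by $\tfrac{1}{2}-\varepsilon$, with $w_n(\varepsilon+x_{n,k})\le\tfrac{k}{\ell}$, so (\ref{eq:x-inequalities}) and (\ref{eq:wn-inequalities}) hold. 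In the machine verification one uses, for each $(n,k)$, a machine number not exceeding this supremum (so the certified value of the left-hand side can only decrease), located by a single monotone bisection sweep over $k$ together with a certified bound $w_n(\varepsilon+x_{n,k})\le\tfrac{k}{\ell}$ from interval evaluation of the maximum over $j=0,\dots,m$ defining $w_n$.

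Next I would recast the sum: putting $p_n(X)=\sum_{k=1}^{\ell}(x_{n,k}-x_{n,k-1})X^k$ with $x_{n,0}=0$ (all coefficients non-negative), one has
\[
\prod_{n=1}^{N}p_n(X)=\sum_{i\ge N}\Biggl(\underset{k_1+\dots+k_N=i}{\sum_{k_1=1}^{\ell}\dots\sum_{k_N=1}^{\ell}}\prod_{n=1}^{N}(x_{n,k_n}-x_{n,k_n-1})\Biggr)X^i,
\]
so the left-hand side of the inequality in Claim~\ref{claim:measure} equals $\tfrac{2}{\delta}2^{N}$ times the sum of the coefficients of $X^0,\dots,X^{\ell}$ in $\prod_{n=1}^{N}p_n(X)\bmod X^{\ell+1}$ (the monomials with some $k_n=0$ contribute nothing, and the constraint $k_1+\dots+k_N\le\ell$ singles out exactly the monomials of degree $\le\ell$). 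I would compute this truncated product iteratively, $q_1=p_1$ and $q_n=q_{n-1}p_n\bmod X^{\ell+1}$ for $n=2,\dots,N$, in ball arithmetic, so that every coefficient of $q_N$ is an interval; summing the lower endpoints of the coefficients of $q_N$ and multiplying by $\tfrac{2}{\delta}2^{N}$ then yields a rigorous lower bound, which is checked to exceed $\tfrac{1}{60}$. That this is the relevant quantity is exactly Lemma~\ref{lem:volume}, whose bound (\ref{eq:volume}) reads $\operatorname{vol}(A')\ge2^N\sum_{i=1}^{\ell}r_i$ with $r_i$ the coefficients above; so Claim~\ref{claim:measure} supplies the one remaining link in the chain $\kappa_0\ge\frac{\gamma_0}{\pi}+2\varkappa\ge\frac{\gamma_0}{\pi}+\frac{2}{\delta}\operatorname{vol}(A')$.

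The hard part is purely computational. With $N=21$, $\ell=16000$ and $m=12000$ one performs about $N\ell$ interval bisections, each evaluating a maximum over $m+1$ terms, followed by $N-1$ truncated polynomial multiplications of length $\ell$, i.e.~$\Theta(N\ell^2)$ interval multiply-add operations if done directly. The delicate issue is accuracy: $\tfrac{1}{\delta}\operatorname{vol}(A')$ exceeds the target $\tfrac{1}{60}$ only marginally, and is further eroded by rounding the $x_{n,k}$ to machine numbers, by the $\varepsilon$-shrinkage defining $A'$, and by the error $1.66\cdot10^{-13}$ inherited from Claim~\ref{claim:Algorithm1}, so the breakpoints must be chosen close to their optimal values and the accumulated interval widths in the $N$-fold product must remain well inside that margin. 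Choosing $\ell$, the working precision and the bisection tolerance so that the certified bound still clears $\tfrac{1}{60}$ while the computation stays feasible is where the effort concentrates; once suitable $x_{n,k}$ are in place, the polynomial reduction and the appeal to Lemma~\ref{lem:volume} are routine.
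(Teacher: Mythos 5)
Your proposal is correct and matches the paper's approach: the paper offers no textual proof of Claim~\ref{claim:measure} beyond deferring to a rigorous interval-arithmetic (ARB/CenteredInterval) verification, and your organization of that computation — choosing each $x_{n,k}$ as a (rounded-down) maximal point with a certified bound $w_n(\varepsilon+x_{n,k})\le k/\ell$, then evaluating the constrained sum as the low-degree part of the truncated product $\prod_n p_n(X)\bmod X^{\ell+1}$ and invoking Lemma~\ref{lem:volume} — is exactly the structure the paper sets up, with the correct value $\varepsilon=3.32\cdot10^{-13}$ inherited from Claim~\ref{claim:Algorithm1}. Your supporting observations (monotonicity and continuity of $w_n$, monotonicity of the sum in each $x_{n,k}$) are all valid.
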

By Claim~\ref{claim:measure} and Lemma~\ref{lem:volume} we have
$2\varkappa\geq\frac{2}{\delta}\operatorname{vol}(A')>\frac{1}{60}$.\qed

\bigskip
\bigskip
\noindent
Maciej Grześkowiak, Faculty of Mathematics and Computer Science, Adam Mickiewicz University, Pozna\'n, 61-614 Pozna\'n, Poland. e-mail: \url{maciejg@amu.edu.pl}

\medskip
\noindent
Jerzy Kaczorowski, Faculty of Mathematics and Computer Science, Adam Mickiewicz University, Pozna\'n, 61-614 Pozna\'n, Poland. e-mail: \url{kjerzy@amu.edu.pl}

\medskip
\noindent
Łukasz Pańkowski, Faculty of Mathematics and Computer Science, Adam Mickiewicz University, Pozna\'n, 61-614 Pozna\'n, Poland. e-mail: \url{lpan@amu.edu.pl}

\medskip
\noindent
Maciej Radziejewski, Faculty of Mathematics and Computer Science, Adam Mickiewicz University, Pozna\'n, 61-614 Pozna\'n, Poland. e-mail: \url{maciejr@amu.edu.pl}

\end{document}